\newcommand {\norm}[1]{\mbox{$\left\|#1\right\|$}} 
\newcommand {\x}{\times} 
\newcommand {\cs}{\mbox{$C^{*}$-algebra}} 
\newcommand {\css}{\mbox{$C^{*}$-algebras}}
\newcommand {\ov}[1]{\mbox{$\overline{#1}$}}
\newcommand {\C}{\mathbb{C}}
\newcommand {\T}{\mathbb{T}} 
\newcommand {\Z}{\mathbb{Z}} 
\newcommand {\Pos}{\mathbb{P}} 
\newcommand {\al}{\mbox{$\alpha$}} 
\newcommand {\bt}{\mbox{$\beta$}} 
\newcommand {\Ga}{\mbox{$\Gamma$}} 
\newcommand {\de}{\mbox{$\delta$}} 
\newcommand {\De}{\mbox{$\Delta$}}
\newcommand {\la}{\mbox{$\lambda$}}
\newcommand {\mfL}{\mathfrak{L}} 
\newcommand {\mfX}{\mathfrak{X}}
\newcommand {\mfK}{\mathfrak{K}} 
\newcommand {\mfH}{\mathfrak{H}}
\newcommand {\bgc}{\begin{center}}
\newcommand {\edc}{\end{center}} 
\newcommand {\be}{\begin{enumerate}} 
\newcommand {\ee}{\end{enumerate}} 
\newcommand {\beqn}{\begin{eqnarray}} 
\newcommand {\eeqn}{\end{eqnarray}} 
\newcommand {\beqns}{\begin{eqnarray*}} 
\newcommand {\eeqns}{\end{eqnarray*}} 
\newcommand {\bq}{\begin{quote}} 
\newcommand {\eq}{\end{quote}} 
\newcommand {\bi}{\begin{itemize}} 
\newcommand {\ei}{\end{itemize}} 
\newcommand {\bd}{\begin{description}} 
\newcommand {\ed}{\end{description}} 
\newcommand {\lan}{\mbox{$\langle$}} 
\newcommand {\ran}{\mbox{$\rangle$}}
\theoremstyle{plain} 
\newtheorem{theorem}{Theorem}
\newtheorem{proposition}{Proposition}
\numberwithin{equation}{section}
\begin{document}
\title[A duality theorem for Fourier-Stieltjes algebras]{An operator space duality theorem for the Fourier-Stieltjes algebra of a locally compact groupoid}
\author{Alan L. T. Paterson}
\address{Department of Mathematics\\
Campus Box 395, University of Colorado,\\
Boulder, Colorado 80309-0395, USA}
\email{apat1erson@gmail.com}
\keywords{Fourier-Stieltjes algebra, locally compact groupoids, positive definite 
functions, operator spaces, completely bounded maps, Haagerup tensor product} 
\subjclass{43A32} 
\date{December, 2010} 

\maketitle

\section{Introduction}

This paper studies a version of the Fourier-Stieltjes algebra for groupoids.   Before considering this in more detail, and for motivation, let us first consider the (much studied) Fourier-Stieltjes algebra $B(G)$ for a locally compact group.  Specializing even further, let us start with a locally compact abelian group $G$ with dual group $\hat{G}$.  The Fourier algebra, $A(G)$, and the Fourier-Stieltjes algebra $B(G)$ can then be defined in commutative harmonic analysis as $L^{1}(\hat{G})$ and $M(\hat{G})$ respectively.  These, of course, are commutative convolution Banach algebras, and using the inverse Fourier transform, one usually regards $A(G), B(G)$ as commutative Banach algebras of continuous bounded functions on the group $G$ itself under pointwise multiplication and a certain norm (not the supremum norm).  Under this identification, positive measures on $M(\hat{G})$ correspond to positive definite functions on $G$, and these span $B(G)$.  Further, point masses on $\hat{G}$ correspond to the characters on $G$.  The positive definite functions on $G$ in turn correspond, by integration against $L^{1}(G)$-functions, to the states of $C^{*}(G)$.  $B(G)$ as the span of the positive definite functions on $G$ then translates over to $B(G)$ as the span of the states on $C^{*}(G)$, and in fact identifies as a Banach space with the dual $C^{*}(G)^{*}$.  In terms of the original definition, i.e. taking $B(G)=M(\hat{G})$ and using $C_{0}(\hat{G})=C^{*}(G)$, this just amounts to saying that $C_{0}(\hat{G})^{*}=M(\hat{G})$.

Turning to the non-abelian locally compact group case, the set $\hat{G}$ of classes of irreducible representations no longer forms a group, the irreducible representations being usually no longer one-dimensional.  So it is not clear how to define $B(G)$ in terms of some ``$M(\hat{G})$''.   However, Pierre Eymard (\cite{Eymard}) was able to show that much of the theory of $A(G)$ and $B(G)$ goes through in suitable form provided we stay with functions on $G$ and omit any mention of ``$L^{1}(\hat{G})$'' and ``$M(\hat{G})$''.   So $B(G)$ is taken to be the span of the positive definite functions on the group $G$.  (Note that all such functions are continuous and bounded on $G$ - as we will see later, a technical difficulty arises at this point in the locally compact groupoid case since measurable positive definite functions need not be continuous (\cite{Ramwal}).)  

To define the norm for $B(G)$ one regards, as above, positive definite functions on the group as states on 
$C^{*}(G)$ and so can simply identify their span $B(G)$ with the dual space $C^{*}(G)^{*}$.  However, it is not immediately clear from this definition of $B(G)$ what its algebra product is, or how its norm can be defined more concretely (rather than just as a Banach space dual norm).   Another disadvantage of staying with the equality ``$B(G)=C^{*}(G)^{*}$ is that an early result of Walter (\cite{Waldual}) showed that $B(G)$ is associated with completely bounded maps on $C^{*}(G)$, which suggests that there is an operator space side to $B(G)$ whose ramifications are not brought out if we regard it only as a dual Banach space.  Fortunately there  
are three other ways that one can regard the equality, which clarify these issues and which are pertinent for the groupoid case.  

The first way is (at first sight) pointless.  One writes 
\[            B(G)=(\C\otimes C^{*}(G)\otimes \C)^{*}.                \]
However, it turns out that this tensor product formulation, in an operator space context, is fundamental for the study of the groupoid case.  (This was first observed by Jean Renault in \cite{Ren}.)  Indeed, even in the group case, the equality $B(G)=(\C\otimes C^{*}(G)\otimes \C)^{*}$ becomes more informative when it is replaced by the corresponding equality of {\em operator spaces}, precisely, 
$B(G)=(\ov{\C}^{r}\otimes _{h} C^{*}(G) \otimes_{h} \C^{c})^{*}$ where $\C^{c}, \ov{\C}^{r}$ are respectively the Hilbert space $\C$ with its column operator space structure and the conjugate $\ov{\C}$ of $\C$ with its row operator space structure, and the Haagerup tensor product is used.  (A brief introduction to the operator space theory used in the paper is given in the second section.)  In the groupoid case, $\C$ above for the group case gets replaced by an $L^{2}$-space on the unit space of the groupoid.

Reverting back to our discussion of the equality $B(G)=C^{*}(G)^{*}$ for the group case (in terms of ``normal'' Functional analysis rather than operator space theory), the second way gives a formula for calculating the norm of $B(G)$.  (We know what $B(G)$ is as an algebra - the span of the positive definite functions on the group - so the problem is to give a concrete way of calculating the $C^{*}(G)^{*}$-norm of the elements of $B(G)$.)  To do this, it is helpful to realize the elements of $B(G)$ as ``coefficients'' of (unitary) representations of $G$ as follows.  By the Gelfand-Naimark-Segal construction, every positive definite function $\phi$ on $G$ gives rise to a representation $L$ of $G$ on some Hilbert space $\mfH$, and there is a vector $\xi\in \mfH$ such $\phi(g)=\lan L_{g}\xi,\xi\ran$ for all $g\in G$.  More generally, using the polarization identity and direct sums of representations, for any 
$\phi\in B(G)$, there exists a representation $L$ of $G$ on some Hilbert space $\mfH$ and elements $\xi, \eta\in \mfH$ such that for all $g\in G$,
\[      \phi(g) = \lan L_{g}\xi,\eta\ran =(\xi,\eta)(g).                   \]
Considering all possible ways of representing $\phi$ as some $(\xi,\eta)$ gives a way to define the norm on 
$B(G)$.  Indeed, using the Jordan decomposition for continuous linear functionals on a $\cs$, Eymard showed that
\begin{equation}   \label{eq:Eymardnorm} 
\norm{\phi} = \inf\{\norm{\xi}\norm{\eta}: \phi=(\xi,\eta)\}
\end{equation}       
the inf being taken over all ways of writing $\phi=(\xi,\eta)$ over all possible representations of $G$.  This way of looking at $B(G)$ also gives the product of two elements of $B(G)$ by just tensoring representations.

There is yet a third way to regard $B(G)$ and this is as an algebra of completely bounded operators on 
$C^{*}(G)$.   This was proved by Walter in \cite{Waldual}.   Indeed, if $\phi\in B(G)$, then pointwise multiplication by $\phi$, $f\to f\phi$, takes $L^{1}(G)$ to itself.  In fact, this map is continuous under the $C^{*}(G)$-norm restricted to $L^{1}(G)$, and since $L^{1}(G)$ is dense in $C^{*}(G)$,  extends to a bounded linear map $T_{\phi}$ on $C^{*}(G)$.  Walter showed that $T_{\phi}$ in fact is a completely bounded map from $C^{*}(G)$ into $C^{*}(G)$ and the completely bounded norm $\norm{T_{\phi}}_{cb}$ of $T_{\phi}$ equals $\norm{\phi}$. 

We might now ask if the theory of $B(G)$ can be extended to the locally compact groupoid case, and if there are correspondingly three ways of regarding $B(G)$.  Justification for the study of $B(G)$ for a locally compact groupoid $G$ lies in the fact that many $\css$ arise naturally as generated by groupoids, so that for such $\css$, groupoids play the role that the group does for group $\css$.  Examples of this include transformation group $\css$, the operator algebras associated with an equivalence relation, the $\cs$ of the holonomy groupoid of a foliation, and graph $\css$.  Given the importance of $B(G)$ in the study of harmonic analysis on groups, in particular, for duality, it is reasonable to expect an equally fundamental role for $B(G)$ in the groupoid case.   Despite it being ``early days'', there is a small but growing literature on $B(G)$ (and the related space $A(G)$) for locally compact groupoids.  The first study in this direction seems to have been made by Walter (\cite{Waldual}) who examined the case where $G=\{1,2,\ldots ,n\}\x \{1,2,\ldots ,n\}$.  Other papers include \cite{Oty,OtyRam,Pat3,Pat4,Ramwal,Ren}.     

In the case of a group, there is, of course, only one unit - the identity element $e$ - but in the case of a groupoid there are usually many units.  In the case of a second countable, locally compact groupoid $G$ with unit space $X$ and left Haar system $\{\la^{x}\}$, one obtains the $\css$ generated by the convolution algebra $C_{c}(G)$ by combining two ingredients, first a special kind of probability measure $\mu$ on $X$ - called a {\em quasi-invariant} measure - with a measurable Hilbert bundle on which the groupoid acts measurably.  In the group case, there is only one quasi-invariant measure, that of the point mass at $e$, and we don't need to mention it explicitly in group representation theory.  In the groupoid case, one integrates the $G$-action against a kind of ``product measure'' $\la^{\mu}=\int_{X} \la^{x}\,d\mu(x)$ (including a ``modular function'' contribution) to obtain a $^{*}$-representation 
$\pi$ of $C_{c}(G)$ on the Hilbert space of 
$L^{2}$-cross-sections of the Hilbert bundle.  With $\mu$ fixed, the largest $\cs$ that can be obtained in this way will be called $C^{*}(G,\mu)$.  This $\cs$ has to be distinguished from the $\cs$ $C^{*}_{\mu}(G)$ of \cite{Ren} 
(defined in the third section of the present paper) which is usually much larger than $C^{*}(G,\mu)$.  In this paper, we will be concerned only with $C^{*}(G,\mu)$ (though see later in this Introduction).  The ``largest'' $\cs$ obtained in this way, as $\mu$ ranges over the set of quasi-invariant measures on $X$, is the full $\cs$ of the groupoid, $C^{*}(G)$, the canonical $\cs$ associated with $G$.

The two papers that are fundamental for the present work are those of Renault (\cite{Ren}) and of Ramsay and Walter (\cite{Ramwal}), and both define their $B(G)$ in a way similar to that of the group case, i.e. as the span of a sutiable class of positive definite functions on $G$ but with certain identifications determined by the  quasi-invariance.   (The group notion of positive definiteness itself extends simply to the groupoid case.)  Renault in \cite{Ren} fixes a quasi-invariant measure $\mu$ and uses essentially bounded $\la^{\mu}$-measurable positive definite functions, whereas Ramsay and Walter, in \cite{Ramwal} uses bounded {\em Borel} positive definite functions, any two of which are identified if there is a Borel set $N$ off which the two functions are equal and is such that 
$\la^{\mu}(N)=0$ for {\em all} quasi-invariant measures $\mu$ on $X$.  Continuity for positive definite functions is not assumed, as one might have expected, basically because in general there are not enough of them to allow the representation theory of $G$ to be used effectively.  In this paper we work, as does \cite{Ren}, in the context of a second countable locally compact groupoid $G$ with left Haar system $\{\la^{x}\}$ and with a given quasi-invariant measure $\mu$ on the unit space $X$ of $G$, and write $B_{\mu}(G)$ in place of $B(G)$ to emphasize the dependence of that space on $\mu$.  The Borel approach of \cite{Ramwal} is closely related to this but differs in having to consider {\em all} quasi-invariant measures at once rather than fixing on one throughout.  The relation between to the two theories will not be considered here.  

We saw earlier that in the group case, $B(G)$-functions can be represented in the form $(\xi,\eta)$ for $\xi,\eta$ in a representation Hilbert space of the group.  A similar result holds in the groupoid case.  Indeed (\cite[\S 3]{Ramwal}, \cite[1.1]{Ren}) the Hilbert space is replaced by a $G$-Hilbert bundle 
$\mfH=\{\mfH_{x}\}_{x\in X}$ and $L$ is a Borel unitary action of $G$ on $\mfH$.  (In particular, each $L_{g}$ is unitary from $\mfH_{s(g)}$ onto $\mfH_{r(g)}$.)   One defines a $B_{\mu}(G)$-function 
$(\al,\bt):G\to \C$ for two essentially bounded sections $\al, \bt$ of $\mfH$ by taking: 
$(\al,\bt)(g)=\lan L_{g}\al_{s(g)},\bt_{r(g)}\ran$.  As in the group case, every $B_{\mu}(G)$-function can be realized as some 
$(\al,\bt)$ for some $G$-Hilbert bundle $\mfH$.   Furthermore, Renault shows that, also as in the group case, we obtain an involutive Banach algebra structure on $B_{\mu}(G)$ by using pointwise products and by defining for 
$\phi\in B_{\mu}(G)$, 
\[    \norm{\phi} = \inf\{\norm{\al}\norm{\bt}: \phi=(\al,\bt)\}.       \]

This raises the natural question: is this a dual norm?  That is, is there some natural Banach space $Z$ such that $B_{\mu}(G)$ (with its norm) is the Banach space dual of $Z$.  In the group case above, $Z$ would be just 
$C^{*}(G)$.  In the groupoid case, Renault solves this, using operator space theory, with a beautiful idea.  In the group case 
(so $\mu=\de_{e}$) and with $\phi=(\xi,\eta)$ ($\xi,\eta\in H$ as above), we identified $\phi$ as a linear functional 
$\theta_{\phi}$ on $L^{1}(G)$ by setting 
\[  \theta_{\phi}(F)=\lan \pi(F)\xi,\eta\ran          \]
where $\pi$ is the integrated form of the representation $L$ of $G$ on $\mfH$.  Then $\theta_{\phi}$ extends to a continuous linear functional on $C^{*}(G)$ and the identification of $B_{\mu}(G)$ with $C^{*}(G)^{*}$ follows.  When we try to do this in the groupoid case, $\mfH$ becomes a $G$-Hilbert bundle $\mathcal{H}$ (fibered over the unit space $X$) and $\pi$ becomes a unitary $G$-representation $L$ on $\mathcal{H}$.  One integrates up the groupoid representation $L$ to get a representation $\pi$ of the Banach $^{*}$-algebra $L^{I}(G)$ (a groupoid analogue of $L^{1}(G)$ for the group case) on the Hilbert space 
$L^{2}(X,\mathcal{H},\mu)$ of square integrable sections of $H$, and replaces $\xi,\eta$ of the group case by essentially bounded sections $\al, \bt$ of the bundle $\mathcal{H}$.  So the natural way to define a linear functional 
$\theta_{\phi}$ on $L^{I}(G)$ is presumably to take ``$\theta_{\phi}(F)=\lan \pi(F)\al,\bt\ran$''.  But this leads to a difficulty since the $\al, \bt$ in the formula for $\theta_{\phi}$ are essentially bounded elements of the Hilbert space $L^{2}(X,H,\mu)$, and these are special elements of this Hilbert space.  We want to involve the whole of this Hilbert space of $L^{2}$-sections, not just the $L^{\infty}$-ones.  Renault solves this difficulty by multiplying both $\al, \bt$ by functions $a, b$ in $L^{2}(X,\mu)$ and this gives two general $L^{2}$-sections of $\mathcal{H}$ to which we can apply $\pi(F)$.  One obtains a linear functional, not on $C^{*}_{\mu}(G)$, but, taking the $a, b$ into account, on 
$\ov{L^{2}(X,\mu)}\otimes C_{\mu}^{*}(G)\otimes L^{2}(X,\mu)$ by defining 
\[   \theta_{\phi}(\ov{b}\otimes F\otimes a)=\lan \pi(F)(a\al),b\bt\ran          \]
with $\phi=(\al,\bt)$.   To interpret $\ov{L^{2}(X,\mu)}\otimes C_{\mu}^{*}(G)\otimes L^{2}(X,\mu)$, or rather a module version of it,  operator space ideas - in particular, the Haagerup tensor product - are required.  (In the group case, 
$L^2(X,\mu)=\C$ which brings us back to our earlier discussion.)

Also, as commented earlier, we will be using the smaller $\cs$ $C^{*}(G,\mu)$ rather than the algebra $C^{*}_{\mu}(G)$ used by Renault.  The $\cs$ 
$C^{*}(G,\mu)$ relates to the fundamental $\css$ associated with a locally compact groupoid, in particular, to the full $\cs$, $C^{*}(G)$, of the groupoid.  We note that in the $C^{*}(G,\mu)$-context, $C_0(X)$ acts as the diagonal algebra, whereas in the case of $C_{\mu}^{*}(G)$, the diagonal algebra is $L^{\infty}(X,\mu)$.  The author has been unable to relate in a satisfactory way \cite[Theorem 2.1, Proposition 2.2]{Ren} to the main theorem of this paper, Theorem~\ref{th:dualbg}.  (In this regard, see the second Note of the third section and Example 3 of the last section of the paper.)   Theorem~\ref{th:dualbg} generalizes the three ways of looking at $B(G)$ for the group case as discussed earlier in this Introduction.    The proof itself is inspired by the proof of Renault's theorem for $C^{*}_{\mu}(G)$, but the essential ideas are simplified by using module versions of a theorem by Effros and Ruan and the generalized Stinespring theorem.  An effort has been made to give complete details of the proof.   In particular, 
certain technical problems involved in making the proof of Theorem~\ref{th:dualbg} work are resolved in the preliminary propositions, Proposition~\ref{prop:abscont}, Proposition~\ref{prop:lebdecomp} and Proposition~\ref{prop:abscont2}.  
The paper closes by discussing four examples illustrating the theorem.  In particular, the fourth example shows how the complete boundedness theory of Schur products (\cite{Paulsen}) fits into the groupoid framework of the theorem.   

The importance of operator space theory in noncommutative harmonic analysis on a locally compact group $G$ was highlighted by the remarkable result of Ruan, described in \cite[16.2]{EffrosRuan}, that while the Fourier algebra $A(G)$ need not be amenable as a Banach algebra when $G$ is amenable, nevertheless {\em $A(G)$ is operator amenable if and only if $G$ is amenable}.  The work of this paper, complementing the work of Renault in \cite{Ren}, emphasizes the even more fundamental, pervasive, role that operator space theory plays in noncommutative harmonic analysis on locally compact groupoids.   For this reason, a brief survey of the operator space theory that we will need for proving Theorem~\ref{th:dualbg} is covered in the second section.  

The author is grateful to Zhong-Jin Ruan and Roger Smith for generous help with the subject of operator space theory, and especially to Arlan Ramsay for many valuable conversations. 

\section{Preliminaries on operator space theory}

This section discusses the basic ideas and results from operator space theory that we will require later in the paper for application within the groupoid context.  For ease of reading, an attempt has been made to make the discussion as self-contained as possible.
The main references for this section are the books: \cite{EffrosRuan} of Effros and Ruan, \cite{Paulsen} of Paulsen, \cite{Pisier} of Pisier, and \cite{BlecherL} of Blecher and Le Merdy. 

A (concrete) {\em operator space} can conveniently be defined as a subspace $X$ of some $B(\mfH,\mfK)$ (or equivalently, of some $B(\mfH)$) where 
$\mfH,\mfK$ are Hilbert spaces.  Each of the spaces $M_n(X)$ of $n\x n$ matrices with entries in $X$ then has its natural norm when identified with a subspace of $B(\mfH^n,\mfK^n)$: for $T=[T_{i,j}]\in M_{n}(X)$, and an $n$-column vector $[h_{j}]$ in $\mfH^{n}$ we just use matrix multiplication: $(Th)_{k}=\sum_{j=1}^{n}T_{k,j}h_{j}$ to get a column vector in $\mfK^{n}$.   Of course, the spaces of rectangular matrices $M_{m,n}(X)$ also have operator space norms 
$\norm{.}_{m,n}$, and these can be reduced to the square case by adding or dropping rows or columns of zeros.  We write $\norm{.}_{n}=\norm{.}_{n,n}$.  

A remarkable abstract characterization of operator spaces - in which we are given a Banach space $X$ and norms 
$\norm{.}_{n}$ assigned to the spaces $M_{n}(X)$ and you can tell, purely from two axioms for the norms 
$\norm{.}_{n}$, when $X$ is a concrete operator space - was given by Ruan.  This is the {\em representation theorem} for operator spaces (e.g. \cite[Theorem 2.3.5]{EffrosRuan}).  The two axioms are, first, that for $x\in M_{m}(X), y\in M_{n}(X)$, we have 
$\norm{x\oplus y}_{m+n}=\max\{\norm{x}_{m}, \norm{y}_{n}\}$, and second, that for 
$\al,\bt\in M_{n}$ and $x\in M_{n}(X)$, we have 
$\norm{\al x\bt}_{n}\leq \norm{\al}\norm{x}_{n}\norm{\bt}$.   One easily checks that for a concrete operator space, both axioms are satisfied.   While the operator space structure on a Banach space $X$ is given by a sequence of norms $\{\norm{.}_{n}\}$ satisfying Ruan's axioms, it is often helpful to specify this structure by a concrete, Hilbert space realization.

If $A$ is a $\cs$ then it becomes an operator space by representing it faithfully on a Hilbert space.  One gets the same norm on $M_{n}(A)$ whatever choice of representation is made, so this gives a {\em canonical} operator space structure on $A$.   Every Banach space $E$ is an operator space in at least one way: identify $E$ canonically with a subspace of the commutative $\cs$ $C(E^{*}_{1})$, where $E^{*}_{1}$ is the unit ball of $E^{*}$ with the relative weak$^{*}$-topology.

The class of operator spaces $(X,\{\norm{.}_{n}\})$ is a category in a natural way.  To define the morphisms, let $X, Y$ be operator spaces and $\Phi:X\to Y$ be a linear map.  Then in the obvious way, $\Phi$ gives rise to a linear map 
$\Phi_{n}:M_{n}(X)\to M_{n}(Y)$ by applying $\Phi$ to matrix entries. The map $\Phi$ is called {\em completely bounded\/} if $\norm{\Phi}_{cb}=\sup_n\norm{\Phi_n}<\infty$.  The set of completely bounded maps from $X$ to $Y$ is a Banach space $(CB(X,Y), \norm{.}_{cb})$, and this is defined to be $Mor(X,Y)$ in the category of operator spaces.   A completely bounded map $\Phi:X\to Y$ is called {\em completely contractive} if 
$\norm{\Phi}_{cb}\leq 1$ and is called a {\em complete isometry} if each 
$\Phi_{n}:M_{n}(X)\to M_{n}(Y)$ is isometric.

The key to the power of operator space theory lies in the fact that its morphisms, the completely bounded maps, are far more special than just Banach space morphisms (bounded linear maps) - in fact, in certain contexts, they are close to algebraic ``homomorphisms''.  The remarkable theorem below, making sense of the last assertion, is due to Wittstock, Haagerup and Paulsen, after earlier work by Stinespring and Arveson.  (There is an important analogue of Theorem~\ref{th:WHP} for bilinear completely bounded maps due to Christensen, Sinclair, Paulsen and Smith (e.g. \cite[Corollary 9.4.5]{EffrosRuan}, \cite[Theorem 1.5.7]{BlecherL}) but we omit this since it will not be needed in the paper.)

\begin{theorem}   \label{th:WHP}
Let $B$ be a $\cs$, $\mfH$ be a Hilbert space and $\Phi:B\to B(\mfH)$ be a completely bounded map.  Then there exists a Hilbert space $\mfL$, a $^{*}$-representation $\pi$ of $B$ on $\mfL$ and bounded linear operators 
$S,T:\mfH\to \mfL$ such that for all $b\in B$,
\begin{equation}   \label{eq:cbpi} 
\Phi(b)=S^{*}\pi(b)T.
\end{equation}                                
Further, $\norm{\Phi}_{cb}\leq \norm{S}\norm{T}$, and $\mfL, \pi, S$ and $T$ can be taken so that  
$\norm{\Phi}_{cb}=\norm{S}\norm{T}$.  Conversely, any linear map $\Phi:B\to B(\mfH)$ which has the form of the right-hand side of (\ref{eq:cbpi}) is a completely bounded map.  
\end{theorem}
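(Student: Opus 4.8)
The plan is to dispose of the (easy) converse first, then to establish the factorization by reducing the completely bounded map $\Phi$ to a completely \emph{positive} map via Paulsen's $2\x 2$-matrix device, and finally to feed that positive map into the classical dilation theory of Stinespring and Arveson (the very results the statement credits as underlying it).

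For the converse, suppose $\Phi(b)=S^{*}\pi(b)T$ for some $^{*}$-representation $\pi$ of $B$ on $\mfL$ and bounded $S,T:\mfH\ra\mfL$. For $[b_{ij}]\in M_{n}(B)$ a block computation gives $\Phi_{n}([b_{ij}])=(I_{n}\otimes S)^{*}\,\pi_{n}([b_{ij}])\,(I_{n}\otimes T)$, where $\pi_{n}=\mathrm{id}_{M_{n}}\otimes\pi$ is a $^{*}$-representation of the $\cs$ $M_{n}(B)$ and hence contractive. Since the diagonal ampliations satisfy $\norm{I_{n}\otimes S}=\norm{S}$ and $\norm{I_{n}\otimes T}=\norm{T}$, we get $\norm{\Phi_{n}}\leq \norm{S}\norm{T}$ for every $n$, whence $\norm{\Phi}_{cb}\leq\norm{S}\norm{T}$. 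This proves the converse and simultaneously shows that \emph{any} factorization forces $\norm{\Phi}_{cb}\leq\norm{S}\norm{T}$; combined with the construction below it yields the asserted equality.

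For the forward direction I first normalize so that $\norm{\Phi}_{cb}=1$. Writing $\tilde{B}$ for the unitization of $B$ with adjoined unit $1$, the heart of the argument is Paulsen's observation applied to the operator system
\[ \mathcal{S}=\left\{ \begin{bmatrix} \la 1 & b \\ c & \mu 1 \end{bmatrix}: \la,\mu\in\C,\ b,c\in B \right\}\subseteq M_{2}(\tilde{B}), \]
which contains the identity of $M_{2}(\tilde{B})$. One defines $\Psi:\mathcal{S}\ra M_{2}(B(\mfH))=B(\mfH\oplus\mfH)$ by
\[ \Psi\!\begin{bmatrix} \la 1 & b \\ c & \mu 1 \end{bmatrix}=\begin{bmatrix} \la I_{\mfH} & \Phi(b) \\ \Phi(c^{*})^{*} & \mu I_{\mfH} \end{bmatrix}, \]
a self-adjoint unital map. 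The key claim, which I expect to be the main obstacle, is that $\Psi$ is \emph{completely positive exactly when} $\norm{\Phi}_{cb}\leq 1$. This rests on the standard positivity criterion for $2\x 2$ operator matrices --- $\begin{bmatrix} P & X \\ X^{*} & Q \end{bmatrix}\geq 0$ (with $P,Q\geq 0$) iff $X=P^{1/2}CQ^{1/2}$ for some contraction $C$ --- which, applied at every matrix level $M_{n}(\mathcal{S})$, converts the positivity of the amplifications of $\Psi$ into the bounds $\norm{\Phi_{n}}\leq 1$, and conversely. Pinning down this equivalence cleanly is the one genuinely delicate point.

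Granting the claim, $\Psi$ is a completely positive map on an operator system, so Arveson's extension theorem extends it to a completely positive $\tilde\Psi:M_{2}(\tilde{B})\ra B(\mfH\oplus\mfH)$ with $\norm{\tilde\Psi}_{cb}=\norm{\tilde\Psi(1)}=1$, and Stinespring's dilation theorem produces a $^{*}$-representation $\Pi$ of $M_{2}(\tilde{B})$ on a Hilbert space $\mfK$ and a contraction $V:\mfH\oplus\mfH\ra\mfK$ with $\tilde\Psi(x)=V^{*}\Pi(x)V$. Using $M_{2}(\tilde{B})\cong M_{2}\otimes\tilde{B}$ and the matrix units of $M_{2}$, the representation $\Pi$ decomposes as $\mathrm{id}_{M_{2}}\otimes\tilde\pi$ on $\mfK=\mfL\oplus\mfL$ for a $^{*}$-representation $\tilde\pi$ of $\tilde{B}$; let $\pi$ be its restriction to $B$. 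Writing $V=[V_{ij}]$ as a $2\x 2$ operator matrix with $V_{ij}:\mfH\ra\mfL$ and evaluating $\tilde\Psi$ on $\begin{bmatrix} 0 & b \\ 0 & 0 \end{bmatrix}$, the $(1,2)$-entry of $V^{*}\Pi(\cdot)V$ reads off as $\Phi(b)=V_{11}^{*}\pi(b)V_{22}$. Setting $S=V_{11}$ and $T=V_{22}$ gives the factorization $\Phi(b)=S^{*}\pi(b)T$, and since $\norm{V}\leq 1$ we have $\norm{S}\norm{T}\leq 1=\norm{\Phi}_{cb}$; together with the converse inequality this forces $\norm{\Phi}_{cb}=\norm{S}\norm{T}$, which (after rescaling back to a general $\Phi$) completes the proof.
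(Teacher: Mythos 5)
The paper gives no proof of this theorem: it is quoted as the classical factorization theorem of Wittstock, Haagerup and Paulsen, with references to \cite[Chapter 8]{Paulsen} and \cite{EffrosRuan}, so there is no internal argument to compare yours against. Judged on its own terms, your proof is correct, and it is in fact the standard textbook proof (Paulsen's off-diagonalization lemma followed by Arveson extension and Stinespring dilation). The converse direction is fine: the block identity $\Phi_{n}([b_{ij}])=(I_{n}\otimes S)^{*}\pi_{n}([b_{ij}])(I_{n}\otimes T)$, contractivity of the $^{*}$-representation $\pi_{n}$ of $M_{n}(B)$, and $\norm{I_{n}\otimes S}=\norm{S}$ give $\norm{\Phi}_{cb}\leq\norm{S}\norm{T}$ for \emph{every} factorization, which is exactly what is needed to upgrade $\norm{S}\norm{T}\leq 1=\norm{\Phi}_{cb}$ to equality at the end. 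The corner extraction is also right: with $x=\begin{bmatrix} 0 & b\\ 0 & 0\end{bmatrix}$ the $(1,2)$-entry of $V^{*}\Pi(x)V$ is $V_{11}^{*}\tilde{\pi}(b)V_{22}$, and $\norm{V_{ij}}\leq\norm{V}\leq 1$ (indeed $V$ is an isometry, since $\tilde\Psi$ is unital). Two points deserve explicit mention. First, the well-definedness of $\Psi$ on your operator system needs $B\cap\C 1=\{0\}$ inside $\tilde{B}$; the forced unitization (adjoining a new unit even when $B$ is unital) guarantees this, and you should say so. Second, the ``key claim'' --- that $\Psi$ is completely positive if and only if $\norm{\Phi}_{cb}\leq 1$ --- is stated but not proved; this is precisely \cite[Lemma 8.1]{Paulsen}, and the $2\x 2$ positivity criterion you invoke (with a standard perturbation to make the diagonal blocks invertible) is indeed the engine of its proof, so the gap is a citation-sized one rather than a conceptual one. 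It is worth noting that your technique is consonant with the paper's own milieu: the author remarks in Section 3 that Renault's proof that $B_{\mu}(G)$ is a Banach space uses a variant of exactly this off-diagonalization device in the positive definite setting for groupoids.
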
 

Operator spaces behave well under natural constructions.  Obviously, every linear subspace $V$ of an operator space $X$ inherits from $X$ an operator space structure.  If $V$ is also closed in $X$, then for each $n$, there is the  natural quotient norm $\norm{.}_{n}$ on $M_{n}(X/V)=M_{n}(X)/M_{n}(V)$, and under these norms, $X/V$ is an operator space.    The Banach space completion of an operator space is an operator space in the natural way (since the closure of a subspace of some $B(\mfH,\mfK)$ is also such a subspace).   If $X,Y$ are operator spaces then $CB(X,Y)$ itself is an operator space, $\norm{.}_{n}$ in this case being the norm obtained by identifying $M_{n}(CB(X,Y))$ with $CB(X,M_{n}(Y))$.  Futher, the dual space $X^{*}$ can be identified with the operator space $CB(X,\C)$, and so is a natural operator space.  (Note (\cite[Corollary 2.2.3]{EffrosRuan}) that for $f\in X^{*}$, 
$\norm{f}_{cb}=\norm{f}$.)                              

A Hilbert space $\mfH$ itself is, of course, not immediately given as an operator space, i.e. as a subspace of some 
$B(\mfH',\mfK')$, but can be made into one in a number of different ways.   Two particular operator space structures for $\mfH$ will concern us: these are the {\em column} Hilbert space $\mfH^{c}$ and the {\em row} Hilbert space $\mfH^{r}$.    In each case, we identify the operator space concretely in the form $B(\mfK,\mfK')$ for appropriate Hilbert spaces $\mfK, \mfK'$.   First, we identify $\mfH$ with $B(\C,\mfH)$, where $\xi\in \mfH$ is identified with the linear map 
$T_{\xi}$ that sends the scalar $\la$ to  $\la\xi$.    To explain the ``column'' terminology let us suppose that $\mfH$ is finite-dimensional.   Let 
$\{e_{1}, \ldots ,e_{n}\}$ be an orthonormal basis for $\mfH$,  and identify, through this basis, $\mfH$ as 
$\ell^{2}_{n}$ (i.e. $\C^{n}$ with the standard inner product).  Then $T_{\xi}$ is the $n\x 1$ {\em column} matrix
whose $i$th entry is $\lan \xi,e_{i}\ran$.  Further (\cite[p.46]{BlecherL}) if $\mfH, \mfK$ are Hilbert spaces, then 
$CB(\mfH^{c},\mfK^{c})=B(\mfH,\mfK)$ canonically.

Second, since Hilbert spaces are reflexive, we can identify 
$\mfH$ with the dual of $\mfH^{*}$ to obtain the {\em row} Hilbert space $\mfH^{r}$.   In more detail, identify
$\mfH^{*}$ with the conjugate Hilbert space $\ov{\mfH}$ and let $\eta\to \ov{\eta}$ be the canonical conjugation from 
$\mfH$ to $\ov{\mfH}$.   So $\ov{\eta}:\mfH\to \C$ is given by: $\ov{\eta}(\xi)=\lan \xi,\eta\ran$.  Note that scalar multiplication in $\ov{\mfH}$ is given by: for 
$\la\in \C$, $\la.\ov{\eta}=\ov{\ov{\la}\eta}$.  We obtain the {\em row} Hilbert operator space $\mfH^{r}$ by associating any $\xi\in \mfH$ with the map $S_{\xi}\in B(\ov{\mfH},\C)$ that takes $\ov{\eta}\in \ov{\mfH}$ to 
$\lan \xi,\eta\ran$.    Taking $\{\ov{e_{1}}, \ldots ,\ov{e_{n}}\}$ as the (orthonormal) basis for $\ov{\mfH}$, each
$S_{\xi}$ is realized as the $(1\x n)$-{\em row} matrix with entries $\lan \xi,e_{i}\ran$.    This is why $\mfH^{r}$ is called the {\em row} Hilbert space.   If, instead of $\mfH^{r}$, we look at $\ov{\mfH}^{r}$, then we obtain a nice duality relation between that operator space and $\mfH^{c}$.  In fact, since 
$\ov{\mfH}^{r}=B(\ov{\ov{\mfH}},\C)=B(\mfH,\C)$, we obtain   
\begin{equation}   \label{eq:hcstar}
(\mfH^{c})^{*}=\ov{\mfH}^{r}              
\end{equation}
as operator spaces.  While the above discussion of the column and row Hilbert operator spaces was within the finite-dimensional context, it can be carried through for general Hilbert spaces (\cite[1.2.23]{BlecherL}).

We now discuss operator modules for operator algebras.   The discussion is based on the detailed account given in the book by Blecher and LeMerdy  (\cite[Chapter 3]{BlecherL} ).    An earlier account of some of this is also given in the memoir of Blecher, Muhly and Paulsen (\cite[Chapter 2]{BlechMuhPaul}).  These accounts cover the case of general operator algebras, but we will only need to consider the $\cs$ case.
In addition to the operator module concept discussed below, there are two other notions of ``operator module'' that are useful, associated respectively with the Haagerup and the projective tensor products, but we will not have occasion to use them.

So let $A$ be a $\cs$ with, of course, its canonical operator space structure.   Let $X$ be a left Banach $A$-module that is an operator space.  Then $X$ is called a {\em concrete left operator $A$-module} if $X$ is given as a closed linear subspace of some $B(\mfK,\mfH)$ with the operator space structure inherited from $B(\mfK,\mfH)$, and is such that the module multiplication 
- a bilinear map - is given by a $^{*}$-representation $\theta$ of $A$ on the Hilbert space $\mfH$ with 
$\theta(A)X\subset X$.   (So for $T\in B(\mfK,\mfH)$, $a.T= \theta(a)\circ T$.)  An {\em (abstract) left operator 
$A$-module} is an 
$X$ that is completely isometrically $A$-isomorphic to a concrete one.   {\em Right operator $A$-modules} and 
{\em operator $A$-bimodules} are defined in the obvious ways.   It is easy to check that if $\theta$ and 
$\pi$ are $^{*}$-representations of $A$ on the Hilbert spaces $\mfH, \mfK$ respectively, then $B(\mfK,\mfH)$ itself is a concrete operator $A$-bimodule, the module actions being the natural ones in which, for $a,b\in A$ and 
$T\in B(\mfK,\mfH)$, $aTb=\theta(a)\circ T \circ \pi(b)$.    

With $\theta$ as above, the Hilbert space $\mfH$ is a left $A$-module with module action given by: 
$a.\xi=\theta(a)\xi$.    The column Hilbert operator space $\mfH^{c}$ can be realized as a space of rank $1$ operators on $\mfH$ (\cite[p.11]{BlecherL}), and it easily follows that $\mfH^{c}$ is a left operator $A$-module 
(\cite[Proposition 3.1.7]{BlecherL}).   Further, there is a dual right module action of $A$ on 
$\mfH^{*}=\ov{\mfH}$ given by: $\ov{\eta}.a=\ov{\pi(a^{*})\eta}$, and $\ov{\mfH}^{r}$ is a right operator $A$-module.

Given operator $A$-bimodules $W,Z$, let $CB_{A,A}(Z,W)$, which we shall usually abbreviate to $CB_{A}(Z,W)$, be the operator space (under the structure that it inherits as a subspace of the operator space $CB(Z,W)$) of completely bounded $A$-bimodule maps $\Phi:Z\to W$ (so that a completely bounded map $\Phi:Z\to W$ belongs to $CB_{A}(Z,W)$
if and only if  
\[               a\Phi (z)a'=\Phi (aza')            \]
for all $z\in Z$ and all $a,a' \in A$).  The next result is a module version of Theorem~\ref{th:WHP}.

\begin{proposition}      \label{prop:ccmodule} 
Let $A$ be a commutative $C^{*}$-subalgebra of the multiplier algebra $M(B)$ of a $\cs$ $B$ (so that $B$ is an operator $A$-bimodule).  Let $\rho:A\to B(\mfH)$ be a representation of $A$ on a Hilbert space 
$\mfH$ (so that $B(\mfH)$ is an operator $A$-bimodule).  Let $\Phi\in CB_{A}(B,B(\mfH))$ be completely bounded.  Then there exists a representation $\pi$ of $B$ on a Hilbert space $\mfL$ (determining canonically a representation $\pi$ of $M(B)$ and hence of $A$ on 
$\mfL$) and bounded left $A$-module maps $S:\mfH\to\mfL, T:\mfH\to\mfL$, such that for all $b\in B$, 
               \[      \Phi (b)=S^{*}\pi(b)T.     \] 
Further, $S$ and $T$ can be taken to satisfy $\norm{\Phi}_{cb}=\norm{S}\norm{T}$.
\end{proposition}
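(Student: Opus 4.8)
The plan is to reduce the completely bounded statement to a completely positive one by a module-equivariant version of Paulsen's off-diagonalisation trick, and then to run a covariant Stinespring dilation. Throughout I normalise $\norm{\Phi}_{cb}=1$ and, after compressing to essential subspaces, I assume the representations in play are nondegenerate, so that $\pi$ extends canonically to $M(B)$ and restricts to $A$.

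First I would form the $2\times 2$ operator system with corners in $A$, namely $\mathcal S=\left\{\begin{pmatrix} a_{1} & b\\ c^{*} & a_{2}\end{pmatrix}: a_{1},a_{2}\in A,\ b,c\in B\right\}\subseteq M_{2}(M(B))$ (unitising $A$ by $\C 1$ when $A$ is nonunital), together with the map $\widehat\Phi\begin{pmatrix} a_{1} & b\\ c^{*} & a_{2}\end{pmatrix}=\begin{pmatrix}\rho(a_{1}) & \Phi(b)\\ \Phi(c)^{*} & \rho(a_{2})\end{pmatrix}$ into $M_{2}(B(\mfH))=B(\mfH\oplus\mfH)$. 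One checks that $\widehat\Phi$ is $*$-linear and is $A$-bimodular for the diagonal action $a\mapsto\mathrm{diag}(a,a)$ on $\mathcal S$ and $a\mapsto\mathrm{diag}(\rho(a),\rho(a))$ on the target, precisely because $\Phi(aba')=\rho(a)\Phi(b)\rho(a')$; in particular $\mathcal S$ is a two-sided $\mathrm{diag}(A)$-module. The decisive point is a module Paulsen lemma: $\widehat\Phi$ is completely positive. For a positive element of $\mathcal S$ with invertible diagonal one factors the off-diagonal entry as $b=a_{1}^{1/2}(a_{1}^{-1/2}ba_{2}^{-1/2})a_{2}^{1/2}$ with contractive middle term, and then bimodularity together with $\norm{\Phi}_{cb}=1$ converts the scalar Schwarz inequality into the operator inequality $\Phi(b)^{*}\rho(a_{1})^{-1}\Phi(b)\le\rho(a_{2})$; the general and matricial cases follow by approximation and amplification, using that $\rho$, being a $*$-representation, is completely positive.

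Next I would apply Stinespring to the completely positive map $\widehat\Phi$, taking the minimal dilation: a $*$-representation $\sigma$ of the $\cs$ generated by $\mathcal S$ on a Hilbert space $\mfK$, and $V:\mfH\oplus\mfH\to\mfK$ with $\widehat\Phi(\cdot)=V^{*}\sigma(\cdot)V$. Because $\widehat\Phi$ is a right $A$-module map, minimality forces the intertwining $\sigma(\mathrm{diag}(a,a))V=V\,\mathrm{diag}(\rho(a),\rho(a))$; this is the one computation I would carry out in full, since it is exactly where the module structure is gained and it fails for a non-minimal dilation. The diagonal matrix units commute with $\sigma(\mathrm{diag}(a,a))$, so compressing $\sigma$ and $V$ by the projections $\sigma(e_{11}),\sigma(e_{22})$ splits off a single $*$-representation $\pi$ of $B$ on $\mfL:=\sigma(e_{11})\mfK$ and operators $S,T:\mfH\to\mfL$ (the two legs of $V$) with $\Phi(b)=S^{*}\pi(b)T$. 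The intertwining for $V$ descends to $\pi(a)S=S\rho(a)$ and $\pi(a)T=T\rho(a)$, so that $S,T$ are left $A$-module maps, and $V^{*}V=\widehat\Phi(1)$ together with minimality yields $\norm{\Phi}_{cb}=\norm{S}\norm{T}$ as in the non-module theorem.

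The main obstacle is the module Paulsen lemma of the second step: proving complete positivity on the $A$-cornered system, rather than the scalar-cornered one, is what encodes the bimodularity, and it is precisely here that commutativity of $A$ and $\norm{\Phi}_{cb}=1$ are used. Two bookkeeping points must be handled alongside it: unitising $A$ and arranging nondegeneracy of $\rho$ so that $\mathcal S$ is a genuine operator system and $\rho$ extends to $A+\C 1$; and ensuring that the dilation of the cp map defined only on the operator system $\mathcal S$ respects bimodularity, for which I would invoke a bimodular (Wittstock) extension of $\widehat\Phi$ to the generated $\cs$ before dilating, or equivalently perform the GNS construction directly on $\mathcal S$ using the module action.
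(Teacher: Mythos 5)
Your proof is correct, but it takes a genuinely different route from the paper's. The paper uses Theorem~\ref{th:WHP} as a black box to get $\Phi(b)=(S')^{*}\pi(b)T'$ with $\norm{\Phi}_{cb}=\norm{S'}\norm{T'}$, and then makes $S',T'$ modular \emph{after the fact} by averaging: $S=\int u S' u^{-1}\,dm(u)$ (and similarly $T$), where $m$ is an invariant mean on the unitary group $\mathcal{U}$ of $A^{1}\subset M(B)$. This is exactly where commutativity of $A$ enters in the paper: it makes $\mathcal{U}$ abelian, hence amenable, so $m$ exists; the averaged operators still satisfy $\Phi(b)=S^{*}\pi(b)T$ and $\norm{S}\leq\norm{S'}$, $\norm{T}\leq\norm{T'}$. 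You instead reprove the representation theorem in covariant form: off-diagonalization with $A$-corners, complete positivity of $\widehat{\Phi}$ via the factorization $b=a_{1}^{1/2}(a_{1}^{-1/2}ba_{2}^{-1/2})a_{2}^{1/2}$ (noting that $A$-bimodularity of $\Phi$ amplifies automatically to $M_{n}(A^{1})$-bimodularity of $\Phi_{n}$), then a covariant Stinespring dilation and corner-cutting. Your last worry (dilating a map defined only on $\mathcal{S}$ while keeping bimodularity) is most cleanly resolved not by a bimodular Wittstock extension but by the multiplicative-domain trick: extend $\widehat{\Phi}$ by Arveson to a CP map $\Psi$ on $M_{2}(M(B))$; since $d=\mathrm{diag}(a,a)$ and $d^{*}d$ lie in $\mathcal{S}$ and $\Psi(d^{*}d)=\Psi(d)^{*}\Psi(d)$ (because $\rho$ is multiplicative), $d$ lies in the multiplicative domain of $\Psi$, which forces $\sigma(d)V=V\Psi(d)$ for the minimal dilation, with no extension lemma needed. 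Two corrections to your commentary: first, commutativity of $A$ is in fact nowhere used in your argument --- your route proves the more general statement for an arbitrary $C^{*}$-subalgebra $A\subset M(B)$, i.e.\ precisely the unpublished result of Smith that the paper cites, whereas the paper's averaging genuinely needs the amenability that commutativity supplies; second, ``compressing to the essential subspace of $\rho$'' is not legitimate, since bimodularity does not confine the range of $\Phi$ to operators respecting that subspace (nothing makes $AB$ dense in $B$) --- the correct normalization, which you in effect also propose, is to extend $\rho$ unitally to $A^{1}=A+\C 1\subset M(B)$ by $\rho(1)=I_{\mfH}$ and observe that $A^{1}$-bimodularity of $\Phi$ is then automatic. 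What each approach buys: the paper's averaging is much shorter given Theorem~\ref{th:WHP} but is tied to the commutative (amenable) case; your dilation-theoretic argument is longer but self-contained at the level of Paulsen's proof and removes the commutativity hypothesis.
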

\begin{proof}
By Theorem~\ref{th:WHP}, there exists a representation $\pi$ of $B$ on a Hilbert space $\mfL$ and bounded, linear maps 
$S',T':\mfH\to \mfL$ such that for all $b\in B$,  
$\Phi (b)=(S')^{*}\pi (b)T'$.  By extending $\rho$ to the multiplier algebra of $A$, the Hilbert space $\mfH$ is a left $M(A)$-module.   A general result (unpublished) of Roger Smith for {\em any} $C^{*}$-subalgebra $A$ of $M(B)$ gives that $S',T'$ can be taken to be 
$A$-module maps $S,T$.  In our special situation a simple averaging procedure gives the result as follows.   Let 
$A^{1}\subset M(B)$ be $A$ if $1\in A$ and $A + \C 1$ otherwise.   Let $\mathcal{U}$ be the unitary group of $A^{1}$.  Since $\Phi$ is a left $A$-module map, for every $u\in \mathcal{U}$, $b\in B$, $\Phi(ub)=u\Phi(b)$.  (In the following, we write $u$ in place of 
$\rho(u), \pi(u)$ for ease of notation.)  So 
$u(S')^{*}u^{-1}\pi(b)T'=(S')^{*}\pi(b)T'$, and with $m$ an invariant mean on $\mathcal{U}$, 
\[  \Phi(b)=(\int u(S')^{*}u^{-1}\,dm(u))\pi(b)T'=S^{*}\pi(b)T'        \]
where $u_{1}Su_{1}^{-1}=S$ for all $u_{1}\in \mathcal{U}$.  Since $\mathcal{U}$ spans $A^{1}$, $S$ is an $A$-module map.  Similarly, using the fact that $\Phi$ is a right $A$-module map, $T=\int u(T')u^{-1}\,dm(u)$ is also an $A$-module map and $\Phi(b)=S^{*}\pi(b)T$.  If we choose $S', T'$ so  that 
$\norm{\Phi}_{cb}=\norm{S'}\norm{T'}$, then the same holds with $S,T$ in place of $S',T'$ since 
$\norm{S}\leq \norm{S'}, \norm{T}\leq \norm{T'}$.
\end{proof}

As in Banach space theory, tensor products play an important role in operator space theory.  For our purposes, we will only require the Haagerup tensor product.  There are other important and interrelated operator space tensor products - see, for example, \cite[Part 2]{EffrosRuan}, \cite[1.5]{BlecherL}.  (In particular, in the context of Theorem~\ref{th:dual} below, the Haagerup tensor product is closely related to the projective tensor product.)  So let $X, Y$ be operator spaces.  For $v\in M_{n,r}(X), w\in M_{r,n}(Y)$, define $v\odot w\in M_{n}(X\otimes Y)$ in terms of ``matrix multiplication'':
\[    (v\odot w)_{i,j}=\sum_{k=1}^{r}v_{i,k}\otimes w_{k,j}.              \]
The norm for $u\in M_n(X\otimes Y)$ is defined:  
\[\norm{u}_h^{n}=\inf\{\norm{v}\norm{w}:u=v\odot w,v\in M_{n,r}(X),w\in M_{r,n}(Y),r\geq 1\}.\]
(It is not hard to show that for every $u$ above, there always is a $v$ and $w$ such that $u=v\odot w$.)
One can show that $X\otimes Y$ under the norms $\norm{.}_{h}^{n}$ satisfies Ruan's axioms, and so is an operator space, and so therefore is its completion $X\otimes_{h} Y$.  This completion is called the {\em Haagerup tensor product} of $X$ and $Y$.  The theory of Haagerup tensor products is described in detail in \cite[Chapter 9]{EffrosRuan}.  Some of the remarkable properties of this tensor product are (\cite[9.2]{EffrosRuan}) that it is associative, that tensor products of complete contractions remain complete contractions, and that it is both projective and injective in the appropriate senses.    

Tensor products are a means for linearizing bilinear maps; the bilinear maps that are linearized by the Haagerup tensor product are those that are {\em completely bounded}.  To define this notion, let $X, Y, Z$ be operator spaces, and let $\phi:X\x Y\to Z$ be a bilinear map.  The norm $\norm{\phi}$ of $\phi$ is defined: 
$\norm{\phi}=\sup\{\norm{\phi(x,y)}: x\in X, y\in Y,\norm{x}\leq 1, \norm{y}\leq 1\}$.  
For each $n$, define a bilinear map $\phi_{n}:M_{n}(X)\x M_{n}(Y)\to 
M_{n}(Z)$, also in terms of ``matrix multiplication'', by setting 
\begin{equation}   \label{eq:phin}
\phi_{n}(u,v)_{i,j}=\sum_{k=1}^{n}\phi(u_{i,k},v_{k,j}).  
\end{equation}
Let $\norm{\phi}_{cb}=\sup_{n\geq 1}\norm{\phi_{n}}$.  (The norms on $M_{n}(X), M_{n}(Y)$ are given, of course, by the operator space structures on $X$ and $Y$.)   Then $\phi$ is called {\em completely bounded} if 
$\norm{\phi}_{cb}<\infty$.   These are the bilinear maps that determine in the natural way the elements of 
$CB(X\otimes_{h} Y,Z)$, and vice versa, corresponding norms being the same.  The bilinear map $\phi$ is said to be 
{\em completely contractive} if $\norm{\phi}_{cb}\leq 1$. 

The next fundamental result, due to Effros and Ruan (\cite[Proposition 9.3.3]{EffrosRuan}), is proved using the equality of the Haagerup and projective tensor products on certain tensor products one of whose factors is a column/row Hilbert space (e.g. \cite{EffrosRuan1991}, \cite{BlechPaul}, \cite{Blech2}, \cite[Proposition 9.3.1]{EffrosRuan}, \cite[Proposition 1.5.14]{BlecherL}).

\begin{theorem}  \label{th:dual} 
Let $\mfH,\mfK$ be Hilbert spaces and $Z$ be an operator space.  Then 
\[     (\ov{\mfH}^{r}\otimes_h Z\otimes_h \mfK^c)^{*}\cong CB(Z,B(\mfK,\mfH))    \]
completely isometrically as operator spaces.  
This identification associates a continuous linear functional $\theta$ on $\ov{\mfH}^{r}\otimes_hZ\otimes_h\mfK^c$ with a mapping $T_{\theta}\in CB(Z,B(\mfK,\mfH))$ given by:  
\begin{equation}\label{eq:etastar}
\lan T_{\theta}(z)(\xi),\eta\ran=\theta(\ov{\eta}\otimes z\otimes\xi )
\end{equation}
for $\ov{\eta}\in \ov{\mfH}^{r},z\in Z,\xi\in\mfK^c$.  
\end{theorem}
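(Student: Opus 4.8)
The plan is to obtain the isomorphism as a composite of standard completely isometric identifications, using the fact highlighted just before the statement that a row Hilbert space on the left (equivalently a column Hilbert space on the right) converts the Haagerup tensor product into the operator space projective tensor product $\ten$. Writing $\cong$ for completely isometric isomorphism throughout, I would first regroup by associativity of $\otimes_h$,
\[ \ov{\mfH}^{r}\otimes_h Z\otimes_h \mfK^{c}\cong \ov{\mfH}^{r}\otimes_h (Z\otimes_h \mfK^{c}), \]
and then absorb the outer row factor into the projective product,
\[ \ov{\mfH}^{r}\otimes_h (Z\otimes_h \mfK^{c})\cong \ov{\mfH}^{r}\ten (Z\otimes_h \mfK^{c}). \]

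Next I would apply the projective duality $(V\ten W)^{*}\cong CB(W,V^{*})$ with $V=\ov{\mfH}^{r}$ and $W=Z\otimes_h \mfK^{c}$; it is the symmetric form of this duality that lets the middle factor $Z$ be isolated cleanly (a purely Haagerup computation instead lands in the less tractable $CB(\ov{\mfH}^{r},CB(Z,\ov{\mfK}^{r}))$). Since $(\mfH^{c})^{*}=\ov{\mfH}^{r}$ by \eqref{eq:hcstar}, reflexivity of the Hilbertian operator space $\mfH^{c}$ gives $(\ov{\mfH}^{r})^{*}=\mfH^{c}$, so this step yields $CB(Z\otimes_h \mfK^{c},\mfH^{c})$. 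Finally the universal (exponential-law) property of the Haagerup tensor product gives $CB(Z\otimes_h \mfK^{c},\mfH^{c})\cong CB(Z,CB(\mfK^{c},\mfH^{c}))$, and the identification $CB(\mfK^{c},\mfH^{c})=B(\mfK,\mfH)$ recorded earlier completes the chain to $CB(Z,B(\mfK,\mfH))$. Every arrow is a complete isometry, so the composite is one as well; in particular $T_{\theta}(z)$ emerges automatically as an element of $B(\mfK,\mfH)$, so no separate boundedness check for the sesquilinear form $(\xi,\eta)\mapsto \theta(\ov{\eta}\otimes z\otimes \xi)$ is needed.

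It then remains to check that this composite sends a functional $\theta$ to precisely the map $T_{\theta}$ of \eqref{eq:etastar}, which I would verify by tracking an elementary tensor through the chain. Projective duality turns $\theta$ into $\Psi_{\theta}\in CB(Z\otimes_h \mfK^{c},\mfH^{c})$ characterized by $\ov{\eta}(\Psi_{\theta}(w))=\theta(\ov{\eta}\otimes w)$, the exponential law turns $\Psi_{\theta}$ into $T_{\theta}$ with $T_{\theta}(z)(\xi)=\Psi_{\theta}(z\otimes \xi)$, and the identification $(\mfH^{c})^{*}=\ov{\mfH}^{r}$ reads $\ov{\eta}$ as the vector functional $\zeta\mapsto \lan \zeta,\eta\ran$; combining these gives $\lan T_{\theta}(z)(\xi),\eta\ran=\theta(\ov{\eta}\otimes z\otimes \xi)$, which is \eqref{eq:etastar}. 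The main obstacle I anticipate is bookkeeping rather than conceptual: one must keep the directions of all the Hilbert-space dualities consistent (which factor absorbs into $\ten$, the conjugate-linear identification $(\mfH^{c})^{*}=\ov{\mfH}^{r}$, and the placement of the conjugation in the row space) so that the abstract composite reproduces \eqref{eq:etastar} with the stated inner-product convention rather than its adjoint or conjugate. A secondary point requiring care is that each ``$\cong$'' must be verified as a complete isometry and not merely an isometry; this is done by running the same identifications with $M_{n}$-valued functionals in place of scalar ones, which is precisely where the operator space structure, rather than just the underlying Banach norms, is used.
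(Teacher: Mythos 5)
Your proposal is correct and takes essentially the route the paper itself indicates: the paper gives no proof of Theorem~\ref{th:dual}, citing it as \cite[Proposition 9.3.3]{EffrosRuan} and noting that it rests on exactly the identification $\ov{\mfH}^{r}\otimes_h W\cong \ov{\mfH}^{r}\ten W$ (Haagerup $=$ projective when one factor is a row/column Hilbert space) that powers your chain. The remaining steps you use --- commutativity-based projective duality $(V\ten W)^{*}\cong CB(W,V^{*})$, the reflexivity $(\ov{\mfH}^{r})^{*}=\mfH^{c}$, the Haagerup exponential law, and $CB(\mfK^{c},\mfH^{c})=B(\mfK,\mfH)$ --- are the standard completion of that argument, and your elementary-tensor tracking correctly recovers \eqref{eq:etastar} with the paper's inner-product conventions.
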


We now describe the theory of module Haagerup products developed in \cite[3.4]{BlecherL} and 
\cite[Chapter 2]{BlechMuhPaul}.   Let $A$ be an operator algebra, $X$ a right operator $A$-module and $Y$ a left operator $A$-module.   As noted earlier, the Haagerup tensor product $X\otimes_{h} Y$ linearizes completely bounded bilinear maps on $X\x Y$.   The {\em module} Haagerup tensor product $X\otimes_{hA} Y$, then, should linearize those completely bounded bilinear maps $\phi$ that respect the module actions of $A$, i.e. such that $\phi(xa,y)=\phi(x,ay)$ for all $x\in X, y\in Y$ and $a\in A$.    It can be defined in a natural, universal way.  
Concretely, $X\otimes_{hA} Y$ is realized as the quotient $(X\otimes_hY)/N$ where $N$ is the closure of the subspace of $X\otimes_h Y$ spanned by tensors of the form $xa\otimes y-x\otimes ay$.  The module Haagerup tensor product is itself an operator space since quotients of operator spaces are operator spaces.

If now $Y$ is an operator $A$-bimodule, $X$ a right operator $A$-module and $Z$ a left operator $A$-module,then 
$X\otimes_{hA} Y$ is a right operator $A$-module under the natural action induced by: 
$(x\otimes  y)a=x\otimes (ya)$, and similarly, $Y\otimes_{hA} Z$ is a left operator $A$-module.   Next the module Haagerup tensor product is - see \cite[Theorem 3.4.10]{BlecherL}) for a more general result - associative in the following sense:  for such $X, Y, Z$, canonically 
\[(X\otimes_{hA}Y)\otimes_{hA}Z\cong X\otimes_{hA}(Y\otimes_{hA}Z)\]
completely isometrically.  We denote this common value by  $X\otimes_{hA}Y\otimes_{hA}Z$.   The following is a special case of 
\cite[Lemma 3.4.6]{BlecherL}.

\begin{proposition}    \label{prop:axa}
Let $A$ be a $\cs$ and $X$ be a left operator $A$-module that is essential (in the sense that $AX=X$).  Then 
$A\otimes_{hA} X$ is completely isometrically isomorphic to $X$ as left operator $A$-modules under the natural map 
$a\otimes x\to ax$.
\end{proposition}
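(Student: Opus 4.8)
The plan is to exhibit the natural map $\mu\colon A\otimes_{hA}X\to X$, $a\otimes x\mapsto ax$, and to build a completely contractive two-sided inverse for it out of a contractive approximate identity of $A$. First I would check that $\mu$ is well defined and completely contractive. The module action $(a,x)\mapsto ax$ is a completely contractive bilinear map: realizing $X$ concretely in some $B(\mfK,\mfH)$ with $a.T=\theta(a)\circ T$, the level-$n$ amplification sends $((a_{ij}),(T_{kl}))$ to $(\sum_k\theta(a_{ik})T_{kj})$, whose norm is at most $\norm{(a_{ij})}_{M_n(A)}\norm{(T_{kl})}_n$ since $\theta$ is a $^{*}$-representation. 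This map is also $A$-balanced, $(ab)x=a(bx)$, so by the universal property defining $\otimes_{hA}$ it factors through a completely contractive $\mu$, which is visibly a left $A$-module map. Surjectivity of $\mu$ is immediate from essentiality $AX=X$.

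The substance of the proof is the construction of a completely contractive inverse $\nu\colon X\to A\otimes_{hA}X$. Let $(e_\la)$ be a contractive approximate identity for $A$ and, for each $\la$, let $\nu_\la(x)$ be the image of $e_\la\otimes x$ in the quotient $A\otimes_{hA}X=(A\otimes_h X)/N$. I expect the key estimate, and the main obstacle, to be showing that each $\nu_\la$ is completely contractive uniformly in $\la$. This I would extract directly from the definition of the Haagerup norm: given $[x_{ij}]\in M_n(X)$, the matrix $[e_\la\otimes x_{ij}]\in M_n(A\otimes_h X)$ factors as $(e_\la I_n)\odot[x_{ij}]$ with $e_\la I_n\in M_n(A)$ and $[x_{ij}]\in M_n(X)$; since $\norm{e_\la I_n}_{M_n(A)}=\norm{e_\la}\leq 1$, the Haagerup norm of this matrix, and hence its quotient norm in $M_n(A\otimes_{hA}X)$, is at most $\norm{[x_{ij}]}_n$. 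Thus $\norm{\nu_\la}_{cb}\le 1$ for all $\la$.

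Next I would show the net $(\nu_\la(x))$ converges and that its limit $\nu(x)$ is the desired map. On the dense span of elementary products $ax'$ one has, using the module relation $e_\la a\otimes x'-e_\la\otimes ax'\in N$, that $\nu_\la(ax')=[e_\la a\otimes x']$, which converges in norm to $[a\otimes x']$ because $e_\la a\to a$ in $A$; uniform complete contractivity of the $\nu_\la$ then propagates this convergence from the dense span to all of $X$, yielding a completely contractive linear $\nu$. Finally I would verify the two composition identities where everything is explicit and invoke continuity: $\mu(\nu(x))=\lim_\la e_\la x=x$ by the approximate-identity property, and $\nu(\mu(a\otimes x))=\nu(ax)=\lim_\la[e_\la a\otimes x]=[a\otimes x]$, so $\mu\circ\nu=\mathrm{id}_X$ and $\nu\circ\mu=\mathrm{id}$ on $A\otimes_{hA}X$. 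Two mutually inverse complete contractions are complete isometries, so $\mu$ is the asserted completely isometric left $A$-module isomorphism.
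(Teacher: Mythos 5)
Your proof is correct, and it is essentially the standard argument: the paper itself gives no proof of Proposition~3, stating it as a special case of Lemma~3.4.6 of Blecher--Le Merdy, whose proof runs along exactly the lines you propose (complete contractivity of the multiplication map via the universal property of $\otimes_{hA}$, then a completely contractive inverse built from a contractive approximate identity $(e_\la)$, with the key uniform estimate coming from the factorization $[e_\la\otimes x_{ij}]=(e_\la I_n)\odot[x_{ij}]$ in the Haagerup norm). Your closing observation that mutually inverse complete contractions are complete isometries, together with the density of $AX$ in $X$ and of the elementary tensors in $A\otimes_{hA}X$, correctly completes the argument.
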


The following module version of Theorem~\ref{th:dual} is a special case of \cite[Corollary 3.5.11]{BlecherL}.

\begin{proposition}   \label{prop:moddual}  
Let $A$ be a $\cs$ with $^{*}$-representations on the Hilbert spaces $\mfH, \mfK$  so that, in particular, $\mfK^{c}$ is a left operator $A$-module and $(\ov{\mfH})^{r}$ is a right operator $A$-module, and $B(\mfK,\mfH)$ is an operator $A$-bimodule.     Let $Z$ be an operator $A$-bimodule.  Then  
\begin{equation}    \label{eq:Adual}
(\ov{\mfH}^{r}\otimes_{hA}Z\otimes_{hA}\mfK^{c})^{*}\cong CB_{A}(Z,B(\mfK,\mfH))       
\end{equation}
completely isometrically, with the same identification $\theta\to T_{\theta}$ as in Theorem~\ref{th:dual}.
\end{proposition}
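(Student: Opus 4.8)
The plan is to deduce the module statement from the non-module version, Theorem~\ref{th:dual}, by passing to a quotient and dualizing. Recall that the module Haagerup tensor product is realized as a quotient: using the associativity of the module product noted above, $\ov{\mfH}^{r}\otimes_{hA}Z\otimes_{hA}\mfK^{c}$ equals $(\ov{\mfH}^{r}\otimes_{h}Z\otimes_{h}\mfK^{c})/N$, where $N$ is the closed linear span of the two families of balancing tensors
\[(\ov{\eta}\cdot a)\otimes z\otimes\xi-\ov{\eta}\otimes(a\cdot z)\otimes\xi,\qquad \ov{\eta}\otimes(z\cdot a)\otimes\xi-\ov{\eta}\otimes z\otimes(a\cdot\xi)\]
as $a$ ranges over $A$, $\ov{\eta}$ over $\ov{\mfH}^{r}$, $z$ over $Z$ and $\xi$ over $\mfK^{c}$. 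I would then invoke the standard operator-space duality $(X/N)^{*}\cong N^{\perp}$, completely isometrically, valid for any closed subspace $N$ of an operator space $X$, with $N^{\perp}$ the annihilator of $N$ inside $X^{*}$. Taking $X=\ov{\mfH}^{r}\otimes_{h}Z\otimes_{h}\mfK^{c}$ and composing with the completely isometric identification $X^{*}\cong CB(Z,B(\mfK,\mfH))$ of Theorem~\ref{th:dual} reduces the whole proposition to identifying the image of $N^{\perp}$ under $\theta\mapsto T_{\theta}$.

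Next I would carry out that identification by pushing the annihilation conditions through the defining relation $\lan T_{\theta}(z)\xi,\eta\ran=\theta(\ov{\eta}\otimes z\otimes\xi)$. Writing $\rho_{\mfH},\rho_{\mfK}$ for the given representations of $A$ on $\mfH,\mfK$, so that $\ov{\eta}\cdot a=\ov{\rho_{\mfH}(a^{*})\eta}$, $a\cdot\xi=\rho_{\mfK}(a)\xi$ and the bimodule action on $B(\mfK,\mfH)$ is $a\cdot S\cdot a'=\rho_{\mfH}(a)\circ S\circ\rho_{\mfK}(a')$, a continuous functional $\theta$ lies in $N^{\perp}$ exactly when it kills each generating tensor. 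Evaluating the first family gives $\lan\rho_{\mfH}(a)T_{\theta}(z)\xi,\eta\ran=\lan T_{\theta}(a\cdot z)\xi,\eta\ran$, i.e. $a\cdot T_{\theta}(z)=T_{\theta}(a\cdot z)$; evaluating the second gives $\lan T_{\theta}(z\cdot a)\xi,\eta\ran=\lan T_{\theta}(z)\rho_{\mfK}(a)\xi,\eta\ran$, i.e. $T_{\theta}(z\cdot a)=T_{\theta}(z)\cdot a$. Together these two say precisely that $aT_{\theta}(z)a'=T_{\theta}(aza')$, so $\theta\mapsto T_{\theta}$ restricts to a bijection of $N^{\perp}$ onto $CB_{A}(Z,B(\mfK,\mfH))$.

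Finally I would upgrade this bijection to a complete isometry. On the one hand $(\ov{\mfH}^{r}\otimes_{hA}Z\otimes_{hA}\mfK^{c})^{*}\cong N^{\perp}$ completely isometrically by the quotient-dual fact, and $N^{\perp}$ carries the operator-space structure it inherits as a subspace of $X^{*}\cong CB(Z,B(\mfK,\mfH))$; on the other hand $CB_{A}(Z,B(\mfK,\mfH))$ carries, by definition, the structure inherited from $CB(Z,B(\mfK,\mfH))$. It therefore suffices to check that these two subspaces agree at every matrix level, that is, that $M_{n}(N^{\perp})$ and $M_{n}(CB_{A}(Z,B(\mfK,\mfH)))$ cut out the same subset of $M_{n}(CB(Z,B(\mfK,\mfH)))=CB(Z,M_{n}(B(\mfK,\mfH)))$. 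This I would verify by repeating the displayed computation entrywise, so that a matrix of functionals annihilates $N$ in the matricial sense iff the associated map into $M_{n}(B(\mfK,\mfH))$ is an $A$-bimodule map. I expect this matrix-level bookkeeping to be the main obstacle: it is what forces the identification to be a genuine complete isometry rather than a mere isometric isomorphism, whereas the underlying bimodule computation is routine.
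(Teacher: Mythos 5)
Your proposal is correct, but note that it takes a different route from the paper for the simple reason that the paper gives no proof at all: Proposition~\ref{prop:moddual} is stated there as a special case of \cite[Corollary 3.5.11]{BlecherL}. What you have written is in effect a self-contained derivation of the needed case of that cited result from Theorem~\ref{th:dual}: realize $\ov{\mfH}^{r}\otimes_{hA}Z\otimes_{hA}\mfK^{c}$ as the quotient of $X=\ov{\mfH}^{r}\otimes_{h}Z\otimes_{h}\mfK^{c}$ by the closed span $N$ of the two balancing families, use the standard quotient--annihilator duality $(X/N)^{*}\cong N^{\perp}$ (see, e.g., \cite[\S 4.2]{EffrosRuan}), and check that $\theta\mapsto T_{\theta}$ carries $N^{\perp}$ exactly onto the bimodule maps. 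Your computation with the actions $\ov{\eta}\cdot a=\ov{\rho_{\mfH}(a^{*})\eta}$ and $a\cdot\xi=\rho_{\mfK}(a)\xi$ is right, and the two conditions you extract are precisely the paper's defining condition $aT_{\theta}(z)a'=T_{\theta}(aza')$ for $CB_{A}$. Two refinements. First, collapsing the iterated module quotients to the single quotient by $N$ needs slightly more than associativity; either invoke the projectivity of the Haagerup tensor product (quoted in \S 2), or argue dually: a functional on $X$ factors through the iterated quotients precisely when it annihilates both families, which is all your argument uses. Second, the step you flag as the main obstacle is automatic: since $\theta\mapsto T_{\theta}$ is a completely isometric isomorphism of $X^{*}$ onto $CB(Z,B(\mfK,\mfH))$ and its amplifications act entrywise, $M_{n}(N^{\perp})$ (matrices with entries in $N^{\perp}$, normed in $M_{n}(X^{*})$) is carried isometrically onto $M_{n}(CB_{A}(Z,B(\mfK,\mfH)))$; restricting a complete isometry to a subspace is a complete isometry onto its image with the inherited matricial structure, so no entrywise recomputation is required.
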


\section{The groupoid Fourier algebras $B_{\mu}(G)$}

We first survey very briefly the basic theory of locally compact (Hausdorff) groupoids.  (For more information, see, for example,  \cite{rg,Pat2,MuhlyTCU}.)  This can be spelled out as follows.  Let $G$ be a locally compact groupoid with left Haar system $\{\la^x\}_{x\in X}$ where $X=G^0=\{g\in G: g^{2}=g\}$  is the unit space of $G$.   So algebraically, $G$ is a small category with inverses.    The range and source maps from $G$ to $X$ are denoted by $r,s$:  
$r(g)=gg^{-1}, s(g)=g^{-1}g$.    Note that the product $g_{1}g_{2}$ of elements $g_{1}, g_{2}$ of $G$ is defined if and only range and source match up, i.e. $r(g_{2})=s(g_{1})$.    Let 
$G^{x}=\{g\in G: r(g)=x\}$.   Multiplication and inversion in $G$ are continuous, and the range and source maps are both continuous and open.   Turning to the left Haar system $\{\la^{x}\}$, each 
$\la^{x}$ is a positive, regular Borel measure whose support is $G^{x}$, $\{\la^{x}\}$ is invariant in the sense that for 
$g\in G$, $g\la^{s(g)}=\la^{r(g)}$.   Further, we require that for every $F\in C_{c}(G)$ - the space of continuous, complex-valued functions on $G$ with compact support - the function 
$x\to \int_{G^{x}} F(g)\,d\la^{x}(g)$ is continuous.   One writes $\la_{x}=(\la^{x})^{-1}$ defined on 
$G_{x}=\{g\in G: s(g)=x\}$.  The space $C_{c}(G)$ is a convolutive $^{*}$-algebra with operations given by:
\[   F*F'(g)=\int_{G^{x}} F(h)F'(h^{-1}g)\,d\la^{x}(h), \hspace{.2in} F^{*}(g)=\ov{F(g^{-1})}.       \]
A norm $\norm{.}_{I}$ is defined on $C_{c}(G)$ by:
\[   \norm{F}_{I}=\max\{\sup_{x\in X}\int_{G^{x}}\!\!\mid F(g)\mid \,d\la^{x}(g), 
\sup_{x\in X}\int_{G_{x}}\!\!\mid F(g)\mid \,d\la_{x}(g)\} <\infty.             \]
Then $C_{c}(G)$ is a normed $^{*}$-algebra under the $I$-norm.

We next survey some of the basic theory of representations of $G$ on measurable Hilbert bundles.   
For a probability measure $\mu$ on the unit space $X$ of $G$,  we integrate up to get a regular Borel measure 
$\nu=\la^{\mu}$: 
\[      \nu=\int_{X}\la^{x}\, d\mu(x)        \]
interpreted in the natural way.  Let $\nu^{-1}$ be the regular Borel measure on $G$ given by: 
$\nu^{-1}(A)=\nu(A^{-1})$.    Then $\mu$ is called {\em quasi-invariant} if $\nu$ is equivalent to $\nu^{-1}$.    The Radon-Nikodym derivative $d\nu/d\nu^{-1}$ on $G$ is called the {\em modular function} for $\mu$, and is
denoted by $\De$, and the symmetrized version $\nu_{0}$ of $\nu$ is given by: $d\nu_{0}=\De^{-1/2}d\nu$.  (In the locally compact group case, we don't notice the quasi-invariant measure $\mu$ involved in the group's representation theory since there is only one unit in that case - the identity element $e$ - and $\mu$ {\em has} to be the point mass at $e$ and can be left implicit.)       

For the representation theory of the groupoid $G$, we consider $G$-Hilbert bundles.  A {\em G-Hilbert bundle} is a pair $(L,\mathcal{H})$ where $\mathcal{H}$ is a Borel Hilbert bundle and for each $g\in G$, we are given a unitary $L_{g}:\mathcal{H}_{s(g)}\to \mathcal{H}_{r(g)}$ such that the map $g\to L_{g}$ is a groupoid homomorphism which is Borel measurable in the sense that for any pair $v,w$ of Borel sections of 
$\mathcal{H}$, the map $g\to \lan L_{g}v_{s(g)},w_{r(g)}\ran$ is Borel measurable on $G$.  Note that no quasi-invariant measure is involved in the notion of a $G$-Hilbert bundle.  The {\em trivial} $G$-Hilbert bundle (cf. \cite[p.93]{Pat2}) is $X\x \C$ where each 
$L_{g}:\C_{s(g)}\to \C_{r(g)}$ is the identity map on $\C$.   

Let $(L,\mathcal{H})$ be a $G$-Hilbert bundle and $\mu$ a quasi-invariant measure on $G$.   We refer to the triple
$(L,\mathcal{H},\mu)$ as a {\em representation triple}.     Let $L^2(\mathcal{H},\mu )$ be the Hilbert space of square integrable sections of $\mathcal{H}$.  The groupoid representation $L$ integrates up to give a 
$^{*}$-representation, denoted by  $\pi_{L,\mu}$, or simply by $\pi_{L}$ or even $\pi$, of the convolution $^{*}$-algebra $C_{c}(G)$ on $L^2(\mathcal{H},\mu )$ where, for $\xi, \eta\in L^2(\mathcal{H},\mu )$,
\begin{equation}    \label{eq:pirep}
\lan\pi (F)\xi ,\eta\ran=\int F(g)\lan L_{g}\xi_{s(g)},\eta_{r(g)}\ran \,d\nu_0(g).
\end{equation}
This can be conveniently contracted to:
\begin{equation}    \label{eq:pirepsimple}
\pi(F)\xi(x)=\int_{G^{x}} F(g)L_{g}\xi_{s(g)}\De^{-1/2}(g)\,d\la^{x}(g).
\end{equation}

A deep theorem of Renault (\cite{Renrep,MuhlyTCU}) - see \cite[Appendix B]{MuhWill} for a complete proof of the theorem (covering even the locally Hausdorff case) - gives that every $I$-norm continuous representation of $C_{c}(G)$ on a Hilbert space is the integrated form of some representation triple $(L,\mathcal{H},\mu)$.  The measure $\mu$ is determined as follows.  The representation $\pi$ extends in a natural way to a representation of $A=C_{0}(X)$ on 
$L^2(\mathcal{H},\mu)$, and one takes $\mu$ to be a probability measure on the spectrum of $\pi(A)\subset X$ that is 
{\em basic} (\cite[Part 1, Chapter 7]{D1}).  (By basic, one means that a subset $W$ of $X$ is 
$\mu$-null if and only if it is null for every spectral measure $\nu_{\xi,\eta}$ for $\pi(A)$ 
($\xi,\eta\in L^2(\mathcal{H},\mu)$).)  Trivially, basic measures are determined uniquely up to equivalence.   Also, such a measure determines up to isomorphism the Hilbert bundle $\mathcal{H}$ (\cite[Part II, Chapter 6, Theorem 2]{D1}). 

We obtain a $C^{*}$-seminorm $\norm{.}_{\mu}$ on  $C_{c}(G)$ by defining
\begin{equation}   \label{eq:normmu}
      \norm{F}_{\mu}=\sup\norm{\pi_{L}(F)}
\end{equation}                 
the $\sup$ being taken over all representation triples $(L,\mathcal{H},\mu)$ ($\mu$ fixed).   Then 
$C^{*}(G,\mu)$ is just the enveloping $\cs$ associated with $(C_{c}(G),\norm{.}_{\mu})$, i.e. the completion of 
$C_{c}(G)/\ker\norm{.}_{\mu}$ or equivalently, the $\cs$ generated by the image of $C_{c}(G)$ under the direct sum of all such $\pi_{L}$'s.    Using the separability of $C_{c}(G)$ under the $I$-norm, we can realize $C^{*}(G,\mu)$ as the closure of the image $\pi(C_{c}(G))$ for a representation $\pi$ coming from some specific representation triple $(L,\mathcal{H},\mu)$.  When we take the direct sum of representations of $C_{c}(G)$ with the $\mu$'s allowed to vary as well, the $\cs$ that we obtain is the full $\cs$, $C^{*}(G)$, and $C_{c}(G)$ is faithfully embedded in it.     This gives the largest $\cs$ norm on $C_{c}(G)$.  A similar argument to that above shows that there is a special $\mu$ for which this full $\cs$ norm is the same as $\norm{.}_{\mu}$.   So $C^{*}(G)$ is one of these 
$C^{*}(G,\mu)$'s.  (There is also a reduced $\cs$ $C^{*}_{red}(G)$ but this is not usually of the form $C^{*}(G,\mu)$ (since it is associated with particular groupoid representations) and we will not have occasion to use it in this paper.)

\def\esssup{\text{ess sup}}

In \cite{Ren}, Renault used a $\cs$ $C^{*}_{\mu}(G)$ bigger than $C^{*}(G,\mu)$ but defined similarly, which we can formulate as follows.  Let $\mu$ be a quasi-invariant measure on $X$ and $L^{I}(G)$ be the space of Borel measurable functions $F:G\to \C$ such that the maps $x\to \int_{G^{x}}\!\left| F(g)\right| \,d\la^{x}(g)$, 
$x\to \int_{G_{x}}\!\left| F(g)\right| \,d\la_{x}(g)$ are $\mu$-essentially bounded.    Two functions in $L^{I}(G)$ are identified if for each $x$, they agree $\la^{x}$ and $\la_{x}$ almost everywhere.    The norm $\norm{.}_{I}$ is defined on $L^{I}(G)$ by:  
\[   \norm{F}_{I}=\max\{\esssup_{x\in X}\int_{G^{x}}\!\!\left| F(g)\right| \,d\la^{x}(g), 
\esssup_{x\in X}\int_{G_{x}}\!\!\left| F(g)\right| \,d\la_{x}(g)\} <\infty.             \]
Then as for $C_{c}(G)$, $L^{I}(G)$ is a normed $^{*}$-algebra under convolution, and contains $C_{c}(G)$.  Again with the quasi-invariant measure $\mu$ fixed, each representation triple 
$(L ,\mathcal{H},\mu )$ integrates (as above) over $L^{I}(G)$ to give an $L^{I}(G)$-continuous $^{*}$-representation.
Define the $\cs$ seminorm $\norm{.}_{\mu}$ on $L^{I}(G)$ as in (\ref{eq:normmu}) and form the enveloping 
$\cs$ of $(L^{I}(G),\norm{.}_{\mu})$.  This $\cs$ is $C^{*}_{\mu}(G)$, and it contains $C^{*}(G,\mu)$ as a $C^{*}$-subalgebra. 

We now turn to the Fourier-Stieltjes algebras for $G$.   This subject is investigated in the papers 
\cite{Ramwal,Ren}.   With the group case in mind, the natural way to define a Fourier-Stieltjes algebra $B(G)$ for a groupoid is first to define the set $P(G)$ of positive definite functions on $G$, and then define $B(G)$ to be the span of $P(G)$.  One would like to be able to require the functions in $P(G)$ to be continuous as they are in the group case, but as shown in \cite[p.364]{Ramwal}, it can happen that there are continuous functions in $B(G)$ that are {\em not} in the span of the continuous $P(G)$ functions, and there are also completeness problems with $B(G)$ if we restrict to continuous functions only.  Instead, we follow Ramsay and Walter (\cite[Definition 3.1]{Ramwal}) and require the functions $\phi$ in $P(G)$ to be Borel measurable and bounded, with positive definiteness taking the form that for all $x\in X$ and $f\in C_{c}(G)$,
\begin{equation}    \label{eq:posdef} 
\int_{G^{x}}\int_{G^{x}} f(g_{1})\ov{f}(g_{2})\phi(g_{2}^{-1}g_{1})\,d\la^{x}(g_{1})\,d\la^{x}(g_{2})\geq 0.
\end{equation}   
Again following \cite{Ramwal}, we define $B(G)$ to be the span of $P(G)$ (in the vector space of scalar-valued functions on $G$).  It follows from \cite[Lemma 3.2]{Ramwal} (and is easy to check directly) that if $(L,\mathcal{H})$ is a $G$-Hilbert bundle and $\al,\bt$ are bounded Borel sections of $\mathcal{H}$, then the function $\phi$ on $G$, defined by: 
$\phi(g)=\lan L_{g}\al_{s(g)},\bt_{r(g)}\ran$, belongs to $B(G)$.   We write $(\al,\bt)$ for this function $\phi$.  The other direction is more subtle.  It follows from \cite[Theorem 3.5]{Ramwal} that if $\phi\in B(G)$, then there exists a $G$-Hilbert bundle
$(L,\mathcal{H})$ and bounded Borel sections $\al, \bt$ of $\mathcal{H}$ such that
\begin{equation}   \label{eq:phixi} 
  \phi=(\al,\bt)
\end{equation} 
$\la^{\mu}$-a.e. for every quasi-invariant measure $\mu$ on $X$.  (This result that $B(G)$-functions can be taken to be for the form $(\al,\bt)$ is the natural groupoid version of the well-known group result of Eymard that $B(G)$-functions are of the form $g\to \lan L_{g}\al,\bt\ran$ for some unitary representation $L$ of $G$ on a Hilbert space of which $\al, \bt$ are elements, but in the groupoid case, it only applies up to quasi-invariance.)   When $\phi\in P(G)$, we can, also as in the group case, take 
$\al=\bt$.

From the preceding, it is clear that if we want to regard the $B(G)$ functions as of the form $(\al,\bt)$ then we need to identify two such functions that coincide $\la^{\mu}$-a.e. for one or more quasi-invariant measures $\mu$.  
There are two natural possibilities in the present situation.  For the first, we fix a quasi-invariant measure $\mu$ and we identify two functions in $B(G)$ if they agree $\la^{\mu}$-almost everywhere on $G$.
We will call $B(G)$ with this identification $B_{\mu}(G)$ to emphasize its dependence on $\mu$.   This is $B(G)$ as it is studied by Renault in \cite{Ren}, and will be further studied in the present paper.  On the other hand, following Ramsay and Walter (\cite{Ramwal}) we can identify two functions in $B(G)$ if they agree $\la^{\mu}$-almost everywhere for {\em every} quasi-invariant measure $\mu$ on $G$.  This $B(G)$ is intrinsic to the groupoid with no preference given to a specific quasi-invariant measure.  As one might expect, there is a close relationship between $B(G)$ and the 
$B_{\mu}(G)$'s but we will not explore this connection in the present paper.   $B(G)$ (and $B_{\mu}(G)$) are algebras under pointwise operations on $G$.  Indeed, by definition, $B(G)$ is a vector space, and the product of two functions is given by taking tensor products of $G$-representations: if $\phi=(\al,\bt),
\phi'=(\al',\bt')$ in $B(G)$, then $\phi\phi'=(\al\otimes \al',\bt\otimes \bt')$.    
It is shown in \cite{Ramwal,Ren} that $B(G)$ and $B_{\mu}(G)$ are Banach algebras under natural norms.

We will consider only $B_{\mu}(G)$ in this paper.  To define the norm $\norm{.}_{\mu}$ on $B_{\mu}(G)$ used by  Renault (\cite[Proposition 1.4]{Ren}), we first have to define the norm used for bounded Borel sections $\al$ of a Hilbert bundle $\mathcal{H}$ over $X$.   We take 
$\norm{\al}$ to be $\esssup_{x\in X} \norm{\al(x)}$.  (Note that $x\to \norm{\al(x)}$ is a Borel function, and that 
$\esssup$ is taken with respect to the measure $\mu$.)  The norm $\norm{\phi}_{\mu}$ of $\phi\in B_{\mu}(G)$ is the natural analogue of the Eymard norm on $B(G)$ in the group case, and is defined by:
\[   \norm{\phi}_{\mu}=\inf\norm{\al}\norm{\bt},         \] 
the $\inf$ being taken over all possible ways of representing $\phi =(\al ,\bt )$ over all possible representation triples 
$(L,\mathcal{H},\mu)$.   A result of Renault (\cite[Proposition 1.4]{Ren}) - see also \cite[Proposition 5]{Pat4} - 
gives that $B_{\mu}(G)$ is a Banach $^{*}$-algebra under pointwise operations, its involution $\phi\to \phi^{*}$ being given by: 
$\phi^{*}(g)=\ov{\phi(g^{-1}})$.  (We note that 
$(\al,\bt)^{*}=(\bt,\al)$.)   The non-trivial part of Renault's argument for this result lies in showing that $B_{\mu}(G)$ is a Banach space.  His method uses a variant of Paulsen's ``off-diagonalization'' technique in the ``positive definite'' setting for groupoids.   (That $B_{\mu}(G)$ is a Banach space will also be an immediate consequence of Theorem~\ref{th:dualbg} below.)   The next result relates Hilbert bundles and the different $B_{\mu}(G)$'s with respect to absolute continuity.
  
\begin{proposition}    \label{prop:abscont}
Let $K$ be a Hilbert space and $\pi':C^{*}(G)\to B(K)$ be a representation.   Let $\pi'$ be the integrated form of the triple
$(L,\mathcal{H},\mu')$.  Let $\mu$ be a quasi-invariant measure on $X$ with $\mu'\ll \mu$, and $\pi$ be the integrated form of the triple $(L,\mathcal{H},\mu)$.  Then the map $R$, where $R\xi'=\xi'(d\mu'/d\mu)^{1/2}$, is a $C_{0}(X)$-module isometric map from 
$L^{2}(X,\mathcal{H},\mu')$ into $L^{2}(X,\mathcal{H},\mu)$.  Further, for $F\in C^{*}(G)$, 
\begin{equation}   \label{eq:Rpi}
R^{*}\pi(F)R=\pi'(F).
\end{equation}
\end{proposition}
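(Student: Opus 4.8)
The plan is to set $h=d\mu'/d\mu$, a non-negative Borel function on $X$ with $\int_X h\,d\mu=1$ (since $\mu'$ is a probability measure), to verify the three claims about $R$ by direct computation, and then to reduce the operator identity $R^{*}\pi(F)R=\pi'(F)$ to a pointwise comparison of the modular functions of $\mu$ and $\mu'$.

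First I would check that $R$ is a well-defined $C_{0}(X)$-module isometry. Well-definedness modulo null sets needs a brief remark: if $\xi'=0$ $\mu'$-a.e., then $R\xi'=\xi'h^{1/2}=0$ $\mu$-a.e., since $h^{1/2}$ annihilates the section on $\{h=0\}$, while on $\{h>0\}$ the measures $\mu$ and $\mu'$ have the same null sets. The isometry then follows from the Radon--Nikodym change of measure:
\[ \int_X \norm{(R\xi')(x)}^{2}\,d\mu(x)=\int_X \norm{\xi'(x)}^{2}h(x)\,d\mu(x)=\int_X \norm{\xi'(x)}^{2}\,d\mu'(x). \]
That $R$ is a $C_{0}(X)$-module map is clear, since pointwise multiplication by $h^{1/2}$ commutes with pointwise multiplication by elements of $C_{0}(X)$.

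Second---and this is the \emph{technical heart} of the argument---I would determine how the modular functions are related. Writing $\nu=\la^{\mu}$ and $\nu'=\la^{\mu'}$, the identity $\nu'=\int_X \la^{x}\,d\mu'(x)=\int_X \la^{x}h(x)\,d\mu(x)$, together with the fact that $\la^{x}$ is supported on $G^{x}$ (so $h(x)=h(r(g))$ for $g$ in its support), gives $d\nu'/d\nu=h\circ r$. Applying the inversion $g\mapsto g^{-1}$, which interchanges $r$ with $s$ and $\nu$ with $\nu^{-1}$, yields $d(\nu')^{-1}/d\nu^{-1}=h\circ s$. Dividing, the modular function $\De'$ of $\mu'$ satisfies
\[ \De'(g)=\frac{h(r(g))}{h(s(g))}\,\De(g). \]
I expect the main obstacle to lie precisely in this bookkeeping: keeping the range/source placement of $h$ correct and confirming the cancellation needed below.

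Finally I would verify the operator identity by comparing matrix coefficients. For $F\in C_{c}(G)$ and $\xi',\eta'\in L^{2}(X,\mathcal{H},\mu')$, formula (\ref{eq:pirep}) for $\pi$, together with $(R\xi')_{s(g)}=h(s(g))^{1/2}\xi'_{s(g)}$, $(R\eta')_{r(g)}=h(r(g))^{1/2}\eta'_{r(g)}$ and $d\nu_0=\De^{-1/2}\,d\nu$, gives
\[ \lan \pi(F)R\xi',R\eta'\ran=\int_G F(g)\,h(s(g))^{1/2}h(r(g))^{1/2}\lan L_{g}\xi'_{s(g)},\eta'_{r(g)}\ran\,\De^{-1/2}(g)\,d\nu(g). \]
On the other side, the symmetrized measure $d\nu_0'=(\De')^{-1/2}\,d\nu'$ computes, via the modular relation above and $d\nu'=(h\circ r)\,d\nu$, to $d\nu_0'(g)=h(s(g))^{1/2}h(r(g))^{1/2}\De^{-1/2}(g)\,d\nu(g)$, so that (\ref{eq:pirep}) for $\pi'$ represents $\lan \pi'(F)\xi',\eta'\ran$ by the identical integral. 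Hence $\lan R^{*}\pi(F)R\xi',\eta'\ran=\lan \pi'(F)\xi',\eta'\ran$ for all $\xi',\eta'$, i.e. $R^{*}\pi(F)R=\pi'(F)$ on $C_{c}(G)$. Since $C_{c}(G)$ is dense in $C^{*}(G)$ and both sides are norm-continuous in $F$ ($R$ being a contraction and $\pi,\pi'$ contractive representations), the identity extends to all $F\in C^{*}(G)$.
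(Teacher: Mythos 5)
Your proof is correct and takes essentially the same route as the paper's: both compute $d\nu'/d\nu=h\circ r$ and $d(\nu')^{-1}/d\nu^{-1}=h\circ s$, deduce $\De'=(h\circ r)(h\circ s)^{-1}\De$ and hence $d\nu_0'=(h\circ r)^{1/2}(h\circ s)^{1/2}\,d\nu_0$, and then verify (\ref{eq:Rpi}) on matrix coefficients for $F\in C_{c}(G)$. The only differences are cosmetic: you spell out the well-definedness/isometry of $R$ and the density extension from $C_{c}(G)$ to $C^{*}(G)$, both of which the paper treats as elementary or leaves implicit.
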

\begin{proof}
It is elementary that $R$ is a $C_{0}(X)$-module isometric map from \\
$L^{2}(X,\mathcal{H},\mu')$ into
$L^{2}(X,\mathcal{H},\mu)$.  To prove (\ref{eq:Rpi}), it is sufficient to prove that for $\xi', \eta'\in L^{2}(X,\mathcal{H},\mu')$,
\begin{equation}   \label{eq:pi'pi}
  \lan \pi(F)\xi,\eta\ran = \lan \pi'(F)\xi',\eta'\ran
\end{equation}   
where $\xi=R\xi', \eta=R\eta'$.  To this end, let $\nu, \nu^{-1}, \De, \nu_{0}$ be as earlier for the representation 
$(L,\mathcal{H},\mu)$, and $\nu',\nu'^{-1}, \De', \nu'_{0}$ be the corresponding measures and function for the representation 
$(L,\mathcal{H},\mu')$.  For the sake of brevity, let $p=d\mu'/d\mu$.   Then $\nu'=\int \la^{x}\,d\mu'(x)=\int \la^{x}p(x)\,d\mu(x)$ so that $d\nu'(g)=p(r(g))d\nu(g)$.  Similarly, 
\[     d((\nu')^{-1})(g)=p(s(g))d(\nu^{-1})(g),       \] 
and so 
\begin{flalign*}
\De'(g) &= d\nu'/d((\nu')^{-1})(g) \\
&= d\nu'/d(\nu)(g)\x d\nu/d(\nu^{-1})(g)\x (d((\nu')^{-1})/d(\nu^{-1})(g))^{-1}\\
&=  p(r(g)p(s(g))^{-1}\De(g).          
\end{flalign*}
So 
\begin{flalign*}
d\nu'_{0}/d\nu_{0}(g) &= \De'(g)^{-1/2}\De(g)^{1/2}(d\nu'/d\nu)(g)\\
&\hspace{-.75in}= [p(r(g))p(s(g)^{-1}\De(g)]^{-1/2}\De(g)^{1/2}p(r(g))\\
&\hspace{-.75in}= p(r(g))^{1/2}p(s(g))^{1/2}).  
\end{flalign*}
So for $F\in C_{c}(G)$,
\begin{flalign*}
\lan \pi'(F)\xi',\eta'\ran &=  \int F(g)\lan L_{g}\xi'_{s(g)},\eta'_{r(g)}\ran\,d\nu_{0}'(g)=\\
&\hspace{-1in} \int F(g)\lan L_{g}\xi_{s(g)},\eta_{r(g)}\ran p(s(g))^{-1/2}
p(r(g))^{-1/2}p(r(g))^{1/2}p(s(g))^{1/2}\,d\nu_{0}(g)\\
&\hspace{-1in}=  \int F(g)\lan L_{g}\xi_{s(g)},\eta_{r(g)}\ran \,d\nu_{0}(g)= \lan \pi(F) \xi,\eta\ran.
\end{flalign*}   
\end{proof}

In the main theorem of the paper below we will be faced with the following situation.  We have two representations 
$\pi, \pi'$ of $C^{*}(G)$ with $\pi'$ factoring through $\pi$.  Let $\mu, \mu'$ be the quasi-invariant measures on $X$ associated respectively with $\pi, \pi'$.  It is tempting to conjecture that $\mu'$ is absolutely continuous with respect to $\mu$.  If that were the case, we could then apply the preceding proposition to replace $\mu'$ by $\mu$ and so simplify the proof of the theorem.   However, the conjecture is false, even in very simple cases.  For example, suppose that $G$ is the unit space groupoid $[0,1]$, $\pi$ the (faithful) multiplication representation of 
$C^{*}(G)=C([0,1])$ on $L^{2}([0,1],\mu)$, where $\mu$ is Lebesgue measure restricted to $[0,1]$, and $\pi'$ the point evaluation at $0$.  Then $\mu'$ is the point mass at $0$, and this is mutually singular with respect to $\mu$.  The following two propositions will enable us to get round this difficulty, and in the proof of the theorem, be able to replace $\mu'$ by a measure that is absolutely continuous with respect to $\mu$.

\begin{proposition}   \label{prop:lebdecomp}
Let $\mu, \mu'$ be quasi-invariant measures on $X$ and let $\mu'=\mu_{0}' + \mu_{1}'$ be the Lebesgue decomposition of $\mu'$ with respect to $\mu$ (so that $\mu_{0}'\perp \mu$ and $\mu_{1}'\ll \mu$).  Then 
$\mu_{0}', \mu_{1}'$ are multiples of quasi-invariant measures on $X$.
\end{proposition}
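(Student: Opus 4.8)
The plan is to lift the Lebesgue decomposition from $X$ up to $G$ via the map $\mu\mapsto\la^{\mu}$, and then to exploit the fact that inversion on $G$ preserves the three relations $\ll$, $\perp$ and $\sim$ between measures. Write $\nu=\la^{\mu}$, $\nu'=\la^{\mu'}$ and $\nu_{i}'=\la^{\mu_{i}'}$ for $i=0,1$; since $\mu\mapsto\la^{\mu}$ is linear, $\nu'=\nu_{0}'+\nu_{1}'$. First I would verify that this is in fact the Lebesgue decomposition of $\nu'$ with respect to $\nu$. For the singular part, choose a Borel set $N\subseteq X$ with $\mu(N)=0$ and $\mu_{0}'(X\setminus N)=0$. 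Since each $\la^{x}$ is supported on $G^{x}=r^{-1}(x)$, the set $r^{-1}(N)$ carries $\nu_{0}'$, while $\nu(r^{-1}(N))=\int_{N}\la^{x}(G^{x})\,d\mu(x)=0$ because we integrate over a $\mu$-null set; hence $\nu_{0}'\perp\nu$. For the absolutely continuous part, writing $h=d\mu_{1}'/d\mu$, the same support property gives $\nu_{1}'(A)=\int_{A}h(r(g))\,d\nu(g)$, so $\nu_{1}'\ll\nu$ with $d\nu_{1}'/d\nu=h\circ r$. Thus $\nu'=\nu_{0}'+\nu_{1}'$ is the Lebesgue decomposition of $\nu'$ with respect to $\nu$.

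Next I would apply the inversion map $g\mapsto g^{-1}$, a Borel automorphism of $G$, under which $\tau\mapsto\tau^{-1}$ (where $\tau^{-1}(A)=\tau(A^{-1})$) is a linear bijection on measures preserving $\ll$, $\perp$ and $\sim$. Applying it to the decomposition above yields $(\nu')^{-1}=(\nu_{0}')^{-1}+(\nu_{1}')^{-1}$ with $(\nu_{0}')^{-1}\perp\nu^{-1}$ and $(\nu_{1}')^{-1}\ll\nu^{-1}$. Since $\mu$ is quasi-invariant, $\nu\sim\nu^{-1}$, so the reference $\nu^{-1}$ may be replaced by $\nu$; hence this is also the Lebesgue decomposition of $(\nu')^{-1}$ with respect to $\nu$.

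Finally I would invoke the quasi-invariance of $\mu'$, which says exactly that $\nu'\sim(\nu')^{-1}$, together with uniqueness of the Lebesgue decomposition. The key general fact is that if two equivalent measures are each decomposed against a common reference measure, then their singular parts are mutually equivalent and their absolutely continuous parts are mutually equivalent: passing to a common carrier set $S$ with $\nu(S)=0$ for both singular parts, the singular parts are the restrictions to $S$ and the absolutely continuous parts the restrictions to $G\setminus S$ of the two equivalent measures $\nu'$ and $(\nu')^{-1}$, and restriction of equivalent measures to a fixed Borel set preserves equivalence. This gives $\nu_{0}'\sim(\nu_{0}')^{-1}$ and $\nu_{1}'\sim(\nu_{1}')^{-1}$. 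After normalizing $\mu_{0}'$ and $\mu_{1}'$ to probability measures (harmless here, since $\la^{c\mu_{i}'}=c\,\la^{\mu_{i}'}$ leaves the equivalences unchanged), these relations say precisely that the normalized measures are quasi-invariant, so $\mu_{0}'$ and $\mu_{1}'$ are multiples of quasi-invariant measures.

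I expect the third step to be the main obstacle. It is tempting but insufficient to observe only that $\nu'\sim(\nu')^{-1}$; the real content is transferring this equivalence of the total measures down to their constituents, which requires both the uniqueness of the Lebesgue decomposition and the common-carrier restriction argument. The first step also needs mild care because $\la^{x}(G^{x})$ may be infinite, but integration over a $\mu$-null set handles this.
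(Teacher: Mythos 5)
Your argument is correct, and it shares the paper's overall skeleton: lift the decomposition to $G$ via $\mu\mapsto\la^{\mu}$, check that $\nu'=\nu_{0}'+\nu_{1}'$ is the Lebesgue decomposition of $\nu'$ with respect to $\nu$ (the paper declares this step ``obvious''; your verification, including the remark that $\la^{x}(G^{x})$ may be infinite but is integrated over a $\mu$-null set, fills it in), then combine quasi-invariance of $\mu$ and $\mu'$ with uniqueness of the Lebesgue decomposition. Where you genuinely diverge is the finishing mechanism. The paper never leaves $\nu'$: choosing everywhere-positive, everywhere-finite versions $f,f'$ of the derivatives with $\nu^{-1}=f\nu$ and $(\nu')^{-1}=f'\nu'$, it expands $\nu'=(f')^{-1}(\nu_{0}')^{-1}+(f')^{-1}(\nu_{1}')^{-1}$, verifies that this is a \emph{second} Lebesgue decomposition of $\nu'$ with respect to $\nu$, and concludes by uniqueness that $(f')^{-1}(\nu_{i}')^{-1}=\nu_{i}'$, i.e.\ the exact identity $(\nu_{i}')^{-1}=f'\nu_{i}'$, from which equivalence is immediate. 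You instead invert the whole decomposition to obtain the Lebesgue decomposition of $(\nu')^{-1}$ with respect to $\nu$, and then transfer $\nu'\sim(\nu')^{-1}$ down to the parts via your common-carrier lemma (equivalent measures have mutually equivalent singular parts and mutually equivalent absolutely continuous parts relative to a common reference). Your route is softer and arguably cleaner: it avoids the careful choice of global Radon--Nikodym representatives, and its key lemma is a general measure-theoretic fact; what it gives up is the sharper structural output of the paper's version, namely that each piece satisfies $(\nu_{i}')^{-1}=f'\nu_{i}'$ with the \emph{same} derivative $f'$, so the pieces inherit the modular data of $\mu'$ (in the spirit of the computation $\De_{0}=\chi_{A_{r,s}}\De$ in the proof of Proposition~\ref{prop:abscont2}) --- though nothing downstream in the paper needs more than the equivalence you obtain. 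One cosmetic point: if one of the $\mu_{i}'$ vanishes it is trivially a (zero) multiple of a quasi-invariant measure, so with that convention your normalization step is complete; note also that since quasi-invariant measures are by definition probability measures here, the normalization is not merely harmless but necessary, as your proof correctly reflects.
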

\begin{proof}
Let $\nu'=\int \la^{x}\,d\mu'(x)$, $\nu_{i}'=\int \la^{x}\,d\mu_{i}'(x)$ for $i=0,1$.  Then $\nu'=\nu_{0}'+\nu_{1}'$.
It is obvious that $\nu_{0}'\perp \nu$ and that  $\nu_{1}'\ll \nu$ so that $\nu_{0}' + \nu_{1}'$ is the Lebesgue decomposition of $\nu'$ with respect to $\nu$.  We have to show that $\nu_{i}'\cong (\nu_{i}')^{-1}$ for $i=0,1$.  To this end, since $\mu,\mu'$ are quasi-invariant, we can write $(\nu')^{-1}=f'\nu', \nu'=(f')^{-1}(\nu')^{-1}$ and 
$\nu^{-1}=f\nu, \nu=f^{-1}\nu^{-1}$ for appropriate Borel functions $f', f$, where for all $x$, 
$0<f'(x), f(x)<\infty$.
Then $\nu'=(f')^{-1}(\nu')^{-1}=(f')^{-1}(\nu_{0}')^{-1} + (f')^{-1}(\nu_{1}')^{-1}$, and  
$(f')^{-1}(\nu_{0}')^{-1}\perp (f')^{-1}\nu^{-1}, (f')^{-1}(\nu_{1}')^{-1}\ll (f')^{-1}\nu^{-1}$.  Since 
$(f')^{-1}\nu^{-1}\sim (f')^{-1}f\nu$, we obtain $(f')^{-1}(\nu_{0}')^{-1}\perp \nu$ and 
$(f')^{-1}(\nu_{1}')^{-1}\ll \nu$.  So $\nu'=(f')^{-1}(\nu_{0}')^{-1} + (f')^{-1}(\nu_{1}')^{-1}$ is the Lebesgue
decomposition of $\nu'$ with respect to $\nu$.   Since $\nu'=\nu_{0}' + \nu_{1}'$ is also the Lebesgue decomposition of $\nu'$ with respect to $\nu$, and the decomposition is unique, we get $(f')^{-1}(\nu_{0}')^{-1}=\nu_{0}'$ and 
$(f')^{-1}(\nu_{1}')^{-1}=\nu_{1}'$.  Hence $\mu_{0}', \mu_{1}'$ are multiples of quasi-invariant measures.
\end{proof} 

\begin{proposition}   \label{prop:abscont2}
Let $\mu$ be a quasi-invariant measure on $X$ and $(L,\mathcal{H},\mu)$ be a representation triple for $G$ with integrated form $\pi:C^{*}(G)\to B(L^{2}(\mathcal{H},\mu))$.   Let $\mu=c_{0}\mu_{0} + c_{1}\mu_{1}$ where $\mu_{0}, \mu_{1}$  are mutually singular, quasi-invariant measures on $X$, and $c_{0}, c_{1}$
are non-negative real numbers.    For $i=0,1$, let $\mathcal{H}_{i}$ be the Hilbert bundle $\mathcal{H}$ with inner product 
$\lan ,\ran_{i,x}$ on each $\mathcal{H}_{x}$ the given scalar product scaled by $c_{i}$.  ($0\mathcal{H}_{x}$ is taken to be the zero Hilbert space.)  For $i=0,1$ let $\pi_{i}$ be the integrated form of the triple $(L,\mathcal{H},\mu_{i})$.    Then
\begin{equation}   \label{eq:dsum} 
L^{2}(\mathcal{H},\mu) = L^{2}(\mathcal{H}_{0},\mu_{0})\oplus L^{2}(\mathcal{H}_{1},\mu_{1}),
\end{equation}
the $L^{2}(\mathcal{H},\mu_{i})$'s are invariant subspaces of $\pi(C^{*}(G))$, for each $i$, $\pi_{i}$ is the subrepresentation of 
$\pi$ given by restriction to $L^{2}(\mathcal{H},\mu_{i})$ and $\pi=\pi_{0}\oplus \pi_{1}$.  
\end{proposition}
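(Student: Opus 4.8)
The plan is to use the mutual singularity of $\mu_{0}$ and $\mu_{1}$ to split $X$ into two Borel pieces and then to show that, up to null sets, the groupoid respects this splitting. First I would fix a Borel partition $X=X_{0}\sqcup X_{1}$ with $\mu_{0}(X_{1})=\mu_{1}(X_{0})=0$, so that $\mu_{i}$ is concentrated on $X_{i}$ and $c_{i}\mu_{i}=\mu|_{X_{i}}$. The decomposition (\ref{eq:dsum}) is then essentially measure-theoretic bookkeeping: restriction of a section to $X_{i}$ lands in the subspace $V_{i}=\{\xi\in L^{2}(\mathcal{H},\mu):\xi=0\text{ off }X_{i}\}$, and since $\int_{X_{i}}\norm{\xi(x)}^{2}\,d\mu=\int\norm{\xi(x)}^{2}\,d(c_{i}\mu_{i})=\int\norm{\xi(x)}_{i,x}^{2}\,d\mu_{i}$, the identity map on sections is an isometry $V_{i}\to L^{2}(\mathcal{H}_{i},\mu_{i})$; the scaling by $c_{i}$ in the fibre inner product is exactly what makes this isometric. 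Hence $L^{2}(\mathcal{H},\mu)=V_{0}\oplus V_{1}\cong L^{2}(\mathcal{H}_{0},\mu_{0})\oplus L^{2}(\mathcal{H}_{1},\mu_{1})$.

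The heart of the argument --- and the step I expect to be the \emph{main obstacle} --- is to show that each $\nu_{i}=\int\la^{x}\,d\mu_{i}(x)$ is concentrated on $G_{i}=r^{-1}(X_{i})\cap s^{-1}(X_{i})$, i.e.\ that the partition is invariant up to a $\nu$-null set. Since $\la^{x}$ is supported on $G^{x}=r^{-1}(x)$ and $\mu_{i}$ lives on $X_{i}$, $\nu_{i}$ is immediately concentrated on $r^{-1}(X_{i})$, so $\nu_{i}^{-1}$ is concentrated on $s^{-1}(X_{i})$. The quasi-invariance of $\mu_{i}$ enters precisely here: $\nu_{i}\sim\nu_{i}^{-1}$ forces $\nu_{i}$ also to be concentrated on $s^{-1}(X_{i})$, hence on $G_{i}$. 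Because $X_{0}\cap X_{1}=\emptyset$ the sets $G_{0},G_{1}$ are disjoint, each $G_{i}$ is inversion-invariant, and $\nu=c_{0}\nu_{0}+c_{1}\nu_{1}$ restricts on $G_{i}$ to $c_{i}\nu_{i}$ (and $\nu^{-1}$ to $c_{i}\nu_{i}^{-1}$). The delicate point is keeping straight which measure's null sets one works with when passing from a statement about $\nu_{i}$ on $G$ to a statement about $\la^{x}$ for $\mu_{i}$-a.e.\ $x$, using that a set is $\nu_{i}$-null iff it is $\la^{x}$-null for $\mu_{i}$-a.e.\ $x$.

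Granting the invariance, property $(2)$ follows from the integrated-form formula (\ref{eq:pirepsimple}). Take $F\in C_{c}(G)$ and $\xi\in V_{i}$. For $\mu$-a.e.\ $x$ outside $X_{i}$, the concentration of $\nu_{1-i}$ on $G_{1-i}$ gives $s(g)\in X_{1-i}$ for $\la^{x}$-a.e.\ $g\in G^{x}$, whence $\xi_{s(g)}=0$ and $\pi(F)\xi(x)=\int_{G^{x}}F(g)L_{g}\xi_{s(g)}\De^{-1/2}(g)\,d\la^{x}(g)=0$. Thus $\pi(F)\xi\in V_{i}$; since $\pi(C_{c}(G))$ is norm-dense in $\pi(C^{*}(G))$, the closed subspace $V_{i}$ is invariant under all of $\pi(C^{*}(G))$.

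Finally, for $(3)$ and $(4)$ I would compare modular functions. On $G_{i}$ we have $\nu=c_{i}\nu_{i}$ and $\nu^{-1}=c_{i}\nu_{i}^{-1}$, so the constants cancel and $\De=d\nu/d\nu^{-1}=d\nu_{i}/d\nu_{i}^{-1}=\De_{i}$ $\nu_{i}$-a.e.\ on $G_{i}$. For $\xi\in V_{i}$ and $\mu_{i}$-a.e.\ $x\in X_{i}$, the integrand in (\ref{eq:pirepsimple}) is supported $\la^{x}$-a.e.\ on $G^{x}\cap G_{i}$, where $\De=\De_{i}$, so $\pi(F)\xi(x)=\int_{G^{x}}F(g)L_{g}\xi_{s(g)}\De_{i}^{-1/2}(g)\,d\la^{x}(g)=\pi_{i}(F)\xi(x)$; note that the fibrewise scaling defining $\mathcal{H}_{i}$ does not alter the operator $\pi_{i}(F)$, since the integrated-form formula refers only to the linear maps $L_{g}$ and the scalar $\De_{i}^{-1/2}$, not to the inner product. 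Hence $\pi|_{V_{i}}=\pi_{i}$ under the identification $V_{i}\cong L^{2}(\mathcal{H}_{i},\mu_{i})$, and combining with $(1)$ and $(2)$ gives $\pi=\pi_{0}\oplus\pi_{1}$. (Alternatively, once $V_{i}$ is known to be invariant, one could identify $\pi|_{V_{i}}$ with $\pi_{i}$ via Proposition~\ref{prop:abscont} applied to $\mu_{i}\ll\mu$, with Radon--Nikodym derivative $c_{i}^{-1}\mathbf{1}_{X_{i}}$, but the direct modular-function computation seems cleaner.) The case $c_{i}=0$ is vacuous, since then $\mathcal{H}_{i}$ is the zero bundle and the corresponding summand is $\{0\}$.
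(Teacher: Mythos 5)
Your proposal is correct and takes essentially the same route as the paper's proof: the same Borel partition $X=X_{0}\sqcup X_{1}$, the same key use of quasi-invariance ($\nu_{i}\sim\nu_{i}^{-1}$) to show each $\nu_{i}$ is concentrated on the inversion-invariant set $r^{-1}(X_{i})\cap s^{-1}(X_{i})$, and the same restriction of modular functions ($\De_{i}=\chi_{G_{i}}\De$), with the fibrewise scaling by $c_{i}$ absorbing the constants exactly as in the paper. The only difference is organizational: you establish invariance of the subspaces and the identification $\pi|_{V_{i}}=\pi_{i}$ pointwise via (\ref{eq:pirepsimple}), whereas the paper obtains both at once from a single weak-form expansion of $\lan\pi(F)\xi,\eta\ran$ using (\ref{eq:pirep}).
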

\begin{proof}
Since $\mu_{0}\perp \mu_{1}$, there exist disjoint Borel subsets $A, B$ of $X$ such that $X=A\cup B$ and 
$\mu_{0}(B)=0=\mu_{1}(A)$.  It is easy to check (\ref{eq:dsum}).  Explicitly, for 
$\xi\in L^{2}(\mathcal{H},\mu)$ let $\xi_{A}, \xi_{B}$ be respectively the restrictions of 
$\xi$ to $A, B$.  The Hilbert space direct sum decomposition of (\ref{eq:dsum}) is given by the map $\xi\to \xi_{A} + \xi_{B}$.
The decomposition $\pi=\pi_{0}\oplus \pi_{1}$ is trivial if either $c_{0}=0$ or $c_{1}=0$, so that we can suppose that 
$c_{0}>0, c_{1}>0$.

Let $\nu=\int \la^{x}\,d\mu(x)$ and for $i=0,1$, $\nu_{i}=\int \la^{x}\,d\mu_{i}(x)$.  (So $\nu_{0}$ in this proof does not have its usual meaning.)  Then $\nu=c_{0}\nu_{0} + c_{1}\nu_{1}$.  Further, $G$ is the disjoint union $r^{-1}A\cup r^{-1}B$, and 
$\nu_{0}(r^{-1}B)=0=\nu_{1}(r^{-1}A)$.  Further,  $(\nu_{0})^{-1}(s^{-1}B)=\nu_{0}(r^{-1}B)=0$, and since $\nu_{0}\sim \nu_{0}^{-1}$, 
we also have $\nu_{0}(s^{-1}B)=0$.  It follows that both of the sets $s^{-1}A\cap r^{-1}B, r^{-1}A\cap s^{-1}B$
are $\nu$-null.  Also $\nu_{0}$ ``lives on'' $A_{r,s}=r^{-1}A\cap s^{-1}A$, and similarly, 
$\nu_{1}$ on $B_{r,s}=r^{-1}B\cap s^{-1}B$.   Note that $(A_{r,s})^{-1}=A_{r,s}, (B_{r,s})^{-1}=B_{r,s}$.   Let $\De_{0}, \De_{1}$ be the modular functions for $\mu_{0}, \mu_{1}$.  Then for $F\in C_{c}(G)$, 
\[ c_{0}\int F\,d\nu_{0}=\int _{A_{r,s}} F\,d\nu=\int_{A_{r,s}} F\,\De\,d\nu^{-1}= c_{0}\int _{A_{r,s}} F\,\De\,d(\nu_{0})^{-1} \]
so that $\De_{0}=\chi_{A_{r,s}}\De$, and similarly, $\De_{1}=\chi_{B_{r,s}}\De$.  Then using the above,
\[ \lan \pi(F)\xi,\eta\ran = \int F(g)\lan L_{g}\xi_{s(g)},\eta_{r(g)}\ran\De^{-1/2}(g)\,d\nu(g) \]
\begin{multline*}
=\int F(g)[\lan L_{g}(\xi_{A})_{s(g)},(\eta_{A})_{r(g)}\ran + \lan L_{g}(\xi_{A})_{s(g)},(\eta_{B})_{r(g)}\ran\\ 
+\lan L_{g}(\xi_{B})_{s(g)},(\eta_{A})_{r(g)}\ran + \lan L_{g}(\xi_{B})_{s(g)},(\eta_{B})_{r(g)}\ran ]  
[(\De_{0})^{-1/2}(g) + (\De_{1})^{-1/2}(g)]\,d\nu(g)
\end{multline*}
\begin{multline*}
= \int F(g)\lan L_{g}(\xi_{A})_{s(g)},(\eta_{A})_{r(g)}\ran_{0}\De_{0}^{-1/2}(g)\,d\nu_{0}(g) + \\
\int F(g)\lan L_{g}(\xi_{B})_{s(g)},(\eta_{B})_{r(g)}\ran_{1}\De_{1}^{-1/2}(g)\,d\nu_{1}(g).
\end{multline*}  
The proposition now follows.
\end{proof}

Let $A=C_{0}(X)$.  Let $A_{\mu}$ be the image of $A$ as multiplication operators in $B(L^{2}(X,\mu))$.  The operator norm of $f\in A_{\mu}$ is the $L^{\infty}$-norm: $\norm{f}=\norm{f}^{\infty,\mu}$.  Now let $(L ,\mathcal{H},\mu )$ be a representation triple of $G$ and $\pi_{L}$ the integrated form of $L$ on $L^{2}(\mathcal{H},\mu)$.  So $\pi_{L}$ is a homomorphism from $C^{*}(G)$ into $B(L^{2}(\mfH,\mu))$.  We will also write $\pi_{L}:A\to B(L^{2}(\mathcal{H},\mu))$ for the diagonal representation of $A$ on $L^{2}(X,\mu)$: $\pi_{L}(f)\xi(x)=f(x)\xi(x)$ for $x\in X$ and $\xi\in L^{2}(\mathcal{H},\mu)$. We note that this $\pi$ can be regarded as a homomorphism with domain $A_{\mu}$ rather than $A$ - it will not usually be an isomorphism on $A_{\mu}$ since some of the Hilbert spaces $\mathcal{H}_{x}$ can be $0$.  However, in the case of the trivial $G$-Hilbert bundle $(L,X\x \C)$, the representation $\pi_{L}$ associated with the representation triple $(L,X\x \C,\mu)$ is an isomorphism on $A_{\mu}$.  It follows that the image of $A$ in 
$C^{*}(G,\mu)$ is also isomorphic to $A_{\mu}$.   From (\ref{eq:pirepsimple}), we see (cf. \cite[p.59]{rg}) that for $f,f'\in A$ and $F\in C_{c}(G)$, $\pi((f\circ r)F(f'\circ s))=\pi(f)\pi(F)\pi(f')$.  We can regard (an image) of $A_{\mu}$ as contained in the multiplier algebra $M(B)$ of $B=C^{*}(G,\mu)$, and it follows, almost by definition 
(\S 2), that $C^{*}(G,\mu)$ is an operator $A$-bimodule.  Also $\mfH=L^{2}(X,\mathcal{H},\mu)$ is a Hilbert $A$-module - and so, in particular, a left $A$-module - so that from the discussion preceding Proposition~\ref{prop:ccmodule},
$\mfH^{c}$ is a left operator $A$-module and $\ov{\mfH}^{r}$ a right operator $A$-module.  So the module Haagerup tensor product 
\[    \mfX_{\mu}=\ov{L^{2}(X,\mu)}^{r}\otimes_{hA}C^{*}(G,\mu )\otimes_{h A}L^2(X,\mu )^c  \]
in Theorem~\ref{th:dualbg} below makes sense.

The following theorem is inspired by analogous results in \cite{Ren}.

\begin{theorem}     \label{th:dualbg}
Let $\mu$ be a quasi-invariant measure on $X$ and let $B=C^{*}(G,\mu)$.  Then $B_{\mu}(G)$ is a Banach $^{*}$-algebra, the following three Banach spaces are canonically isometrically isomorphic, and the operator spaces in {\em (1)} and {\em (2)} are completely isometrically isomorphic.\\ 
{\em (1)} $\mfX_{\mu}^{*}$;\\
{\em (2)} the space $CB_{A}(B,B(L^2(X,\mu)))$ of completely bounded, $A$-bimodule maps from 
$B$ into the operator algebra of bounded linear maps on $L^2(X,\mu)$;\\
{\em (3)} $B_{\mu}(G)$.
\end{theorem}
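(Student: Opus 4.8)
The plan is to prove the equivalences in two stages, reserving the real work for the identification of $B_{\mu}(G)$ with the space of completely bounded module maps.

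First, the identification of (1) with (2) is essentially free. Taking $\mfH=\mfK=L^{2}(X,\mu)$ with $A=C_{0}(X)$ acting diagonally and $Z=B=C^{*}(G,\mu)$ (an operator $A$-bimodule, as noted just before the theorem), Proposition~\ref{prop:moddual} gives exactly $\mfX_{\mu}^{*}\cong CB_{A}(B,B(L^{2}(X,\mu)))$ completely isometrically, via the canonical pairing $\theta\mapsto T_{\theta}$ of (\ref{eq:etastar}). So nothing is needed here beyond quoting Proposition~\ref{prop:moddual}, and the operator-space assertion of the theorem is immediate.

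The substance is the isometric identification of (3) with (2). I would define $\La:B_{\mu}(G)\to CB_{A}(B,B(L^{2}(X,\mu)))$ as follows. Given $\phi\in B_{\mu}(G)$, write $\phi=(\al,\bt)$ for a representation triple $(L,\mathcal{H},\mu)$ with bounded Borel sections $\al,\bt$, let $\pi=\pi_{L,\mu}$, and let $T_{\phi}(F)$ be the operator on $L^{2}(X,\mu)$ determined by $\lan T_{\phi}(F)a,b\ran=\lan\pi(F)(a\al),b\bt\ran$ for $a,b\in L^{2}(X,\mu)$. Unwinding (\ref{eq:pirep}) gives
\[ \lan T_{\phi}(F)a,b\ran=\int F(g)\,a(s(g))\,\ov{b(r(g))}\,\phi(g)\,d\nu_{0}(g), \]
so $T_{\phi}$ depends only on $\phi$ (hence is independent of the chosen $(\al,\bt)$ and of $\mu$-null changes), is linear, and vanishes only when $\phi=0$ $\la^{\mu}$-a.e.; this yields well-definedness and injectivity of $\La$. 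Writing $M_{\al}a=a\al,\ M_{\bt}b=b\bt$ (bounded, $\norm{M_{\al}}\le\norm{\al},\ \norm{M_{\bt}}\le\norm{\bt}$), we have $T_{\phi}(F)=M_{\bt}^{*}\pi(F)M_{\al}$, a Stinespring form, so by the converse half of Theorem~\ref{th:WHP} the map $T_{\phi}$ is completely bounded with $\norm{T_{\phi}}_{cb}\le\norm{\al}\norm{\bt}$; taking the infimum over representations of $\phi$ gives $\norm{T_{\phi}}_{cb}\le\norm{\phi}_{\mu}$. The $A$-bimodule property follows from $\pi((f\circ r)F(f'\circ s))=\pi(f)\pi(F)\pi(f')$, noted before the theorem, together with the diagonal action of $A$.

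The reverse direction — that every $\Phi\in CB_{A}(B,B(L^{2}(X,\mu)))$ is some $T_{\phi}$ with $\norm{\phi}_{\mu}\le\norm{\Phi}_{cb}$ — is where the main difficulty lies, and where the preliminary propositions are used. By the module Stinespring theorem, Proposition~\ref{prop:ccmodule}, write $\Phi(F)=S^{*}\pi'(F)T$ with $\pi'$ a representation of $B$ on some $\mfL$, $S,T$ bounded left $A$-module maps, and $\norm{\Phi}_{cb}=\norm{S}\norm{T}$. By Renault's disintegration theorem $\pi'$ is the integrated form of a triple $(L',\mathcal{H}',\mu')$, and the obstacle is that $\mu'$ need not be absolutely continuous with respect to $\mu$ (as the $[0,1]$ example shows). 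To get round this I would take the Lebesgue decomposition $\mu'=\mu_{0}'+\mu_{1}'$, use Proposition~\ref{prop:lebdecomp} to see that $\mu_{0}'\perp\mu$ and $\mu_{1}'\ll\mu$ are (multiples of) quasi-invariant measures, then Proposition~\ref{prop:abscont2} to split $\pi'=\pi_{0}'\oplus\pi_{1}'$ accordingly. Since $S,T$ are $A$-module maps out of $L^{2}(X,\mu)$ and $\mu_{0}'\perp\mu$, their compressions into the singular summand $L^{2}(\mathcal{H}_{0}',\mu_{0}')$ vanish (a continuous approximation of the characteristic function separating the supports of $\mu$ and $\mu_{0}'$ kills them), so only the part with $\mu_{1}'\ll\mu$ survives. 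Proposition~\ref{prop:abscont} then supplies a $C_{0}(X)$-module isometry $R$ with $R^{*}\pi_{\mu}(F)R=\pi_{1}'(F)$ for the triple $(L_{1}',\mathcal{H}_{1}',\mu)$ built over $\mu$ itself; absorbing $R$ into $S,T$ rewrites $\Phi(F)=(S'')^{*}\pi_{\mu}(F)T''$ with $S'',T''$ $A$-module maps into a Hilbert bundle over $\mu$. Finally, identifying $A$-module maps $L^{2}(X,\mu)\to L^{2}(\mathcal{H}'',\mu)$ with multiplication operators $M_{\bt},M_{\al}$ by essentially bounded Borel sections (a fibrewise disintegration of the module map, with $\norm{M_{\al}}=\norm{\al}$) and setting $\phi=(\al,\bt)$ yields $T_{\phi}=\Phi$ and $\norm{\phi}_{\mu}\le\norm{\al}\norm{\bt}=\norm{S''}\norm{T''}\le\norm{S}\norm{T}=\norm{\Phi}_{cb}$. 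Combined with the earlier inequality this makes $\La$ an onto isometry, giving (2)$\cong$(3). It then remains only to record the Banach $^{*}$-algebra structure: completeness of $B_{\mu}(G)$ follows from its being $\mfX_{\mu}^{*}$ through the isometries just established, submultiplicativity of $\norm{\cdot}_{\mu}$ from the tensor-product description $(\al,\bt)(\al',\bt')=(\al\otimes\al',\bt\otimes\bt')$ of the product, and isometry of the involution $\phi^{*}(g)=\ov{\phi(g^{-1})}$ from $(\al,\bt)^{*}=(\bt,\al)$; these follow Renault's argument and are routine given the norm formula. I expect the genuine obstacle to be this reverse direction, specifically controlling the non-absolutely-continuous part of the disintegrating measure $\mu'$ and the fibrewise identification of module maps with bounded sections; everything else is bookkeeping once Propositions~\ref{prop:ccmodule}, \ref{prop:abscont}, \ref{prop:lebdecomp} and \ref{prop:abscont2} are in hand.
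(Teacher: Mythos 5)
Your proposal is correct and takes essentially the same route as the paper's own proof: Proposition~\ref{prop:moddual} for the complete isometry (1)$\cong$(2), then Proposition~\ref{prop:ccmodule} plus Renault's disintegration theorem, the Lebesgue decomposition handled via Propositions~\ref{prop:lebdecomp} and~\ref{prop:abscont2}, the observation that the ranges of $S,T$ lie in the absolutely continuous summand (killed by continuous approximations of characteristic functions), Proposition~\ref{prop:abscont} to pass to $\mu$, and finally the identification of the $A$-module maps with multiplication by essentially bounded sections $\al=T(1)$, $\bt=S(1)$ giving $\norm{\Phi}=\norm{\al}\norm{\bt}=\norm{\phi}_{\mu}$. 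The only cosmetic difference is direction: you define $\phi\mapsto T_{\phi}$ first and then prove surjectivity via the Stinespring factorization, while the paper starts from $\Phi$ and extracts $\phi$, but both halves appear in each argument.
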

\begin{proof}
For the first claim, we know that $B_{\mu}(G)$ is a commutative normed $^{*}$-algebra.  That $B_{\mu}(G)$ is a Banach space (which is originally due to Renault) follows from the rest of the theorem, since $X_{\mu}^{*}$ (or 
$CB_{A}(B,B(L^2(X,\mu))$)  are Banach spaces.  We now prove the rest of the theorem.

By Proposition~\ref{prop:moddual} (with $\mfH=\mfK=L^{2}(X,\mu)$), the operator space $\mfX_{\mu}^{*}$ is completely isometrically isomorphic to the operator space 
$CB_{A}(B,B(L^2(X,\mu)))$ under the map $\theta\to T_{\theta}$ given in (\ref{eq:etastar}).  So (1) is completely isometrically isomorphic to (2).

We now show that the Banach spaces of (2) and (3) are isometrically isomorphic.  Let 
$\Phi\in CB_{A}(B,B(L^2(X,\mu)))$.   By Proposition~\ref{prop:ccmodule} (with $\mfH=L^{2}(X,\mu)$) there exists a Hilbert space $\mfL$ with a representation $\pi'$ of $B$ on $\mfL$ - and also of $A\subset M(B)$ - and $A$-module maps 
$S,T:L^2(X,\mu)\to\mfL$ such that for all $F\in B$, 
\begin{equation}\label{eq:phipi}
\Phi (b)=S^{*}\pi' (F)T.
\end{equation}
Furthermore, we can assume that
\begin{equation}   \label{eq:normst}
\norm{S}\norm{T}=\norm{\Phi}.
\end{equation} 
           
Since $\pi'$ is non-degenerate, $B$ is separable and the ranges of $S, T$ are separable (since $L^{2}(X,\mu)$ is).  So we can take $\mfL$ to be separable.  Then $\pi'$ is the integrated form of some representation triple 
$(L',\mathcal{H}',\mu')$ (and so we can identify $\mfL$ with $L^{2}(\mathcal{H}',\mu')$).  We now want to replace $\mu'$ by a quasi-invariant measure which is absolutely continuous with respect to 
$\mu$.  To this end, let $\mu'=\mu_{0} + \mu_{1}$ be the Lebesgue decomposition of $\mu'$ with respect to $\mu$.  So 
$\mu_{0}\perp \mu$ and $\mu_{1}\ll \mu$.  By 
Proposition~\ref{prop:lebdecomp}, $\mu_{0}=c_{0}\mu_{0}', \mu_{1}=c_{1}\mu_{1}'$ where the $c_{i}$'s are non-negative real numbers and the $\mu_{i}'$'s are quasi-invariant measures.  We will suppose that both $c_{i}$'s are positive, the easier cases where one or other is $0$ being left to the reader.  Then $\mu_{0}'\perp \mu$ and $\mu_{1}'\ll \mu$.  By 
Proposition~\ref{prop:abscont2}, we have the direct sum 
$L^{2}(\mathcal{H}',\mu') = L^{2}(\mathcal{H}'_{0},\mu_{0}')\oplus L^{2}(\mathcal{H}'_{1},\mu_{1}')$.   We now claim that $S(L^{2}(X,\mu))\subset L^{2}(\mathcal{H}'_{1},\mu_{1}')$ (and also, of course, $T(L^{2}(X,\mu))\subset 
L^{2}(\mathcal{H}'_{1},\mu_{1}')$). 

To prove this, let $A, B$ be disjoint measurable subsets of $X$ such that $X=A\cup B$ and 
$\mu_{0}'(B)=0=\mu(A)$.  Since $\mu_{1}'\ll \mu$, we also have $\mu_{1}'(A)=0$.  It is sufficient then to show that 
$\norm{(SF)_{\mid A})}_{2}=0$ (in $L^{2}(\mathcal{H}',\mu')$) for all $F\in L^{2}(X,\mu)$.   This in turn is equivalent to showing that for any compact subset $C$ of $A$, 
$\norm{\chi_{C}SF}^{2}=\int_{C}\norm{SF(x)}^{2}\,d\mu'(x)=0$.  To prove that, let 
$V$ be an open subset containing $C$ and let $f\in C_{c}(X)$ with $0\leq f\leq 1$, $f=0$ outside $V$ and $f=1$ on $C$.
Then $\norm{\chi_{C}SF}\leq \norm{fSF}=\norm{S(fF)}\leq 
\norm{S}(\int \left|f(x)\right|^{2}\norm{F(x)}^{2}\,d\mu(x))^{1/2}
\leq \norm{S} (\int_{V} \norm{F(x)}^{2}\,d\mu(x))^{1/2}$.  Since $\mu(C)=0$, there exists a decreasing sequence of open sets $V_{n}$ in $X$ containing $C$ such that $\mu(V_{n})<1/n$.  The dominated convergence theorem then gives that 
$\int_{V_{n}} \norm{F(x)}^{2}\,d\mu(x)\to 0$, and it follows that $\norm{\chi_{C}SF}^{2}=0$ as required.   

By Proposition~\ref{prop:abscont2}, we can take $\mu'$ to be $\mu'_{1}$.  For the purpose of applying 
Proposition~\ref{prop:abscont}, take $\mathcal{H}=\mathcal{H}'_{1}$ and $L=L'$.  Using Proposition~\ref{prop:abscont2}, (\ref{eq:Rpi}) and (\ref{eq:phipi}), we can replace $(L',\mathcal{H}',\mu', \pi')$ by the quadruple $(L,\mathcal{H},\mu,\pi)$ and $S,T$ by 
$R\circ S, R\circ T$.  In particular, we now have $\mfL=L^{2}(\mathcal{H},\mu)$. 

Since $T$ commutes with the $C_0(X)$ actions, it is decomposable and so is given by a measurable family 
$\{T_x:x\in X\}$ of essentially bounded linear operators with $T_x:\C\to\mfL_x$.  (Here, of course, $L^2(X,\mu )$ trivially disintegrates as $\int\C_x\,d\mu (x)$, and $\mfL_{x}=\mathcal{H}_{x}$.)  Similarly, $S$ is also given by a measurable family 
$\{S_x:x\in X\}$ where each $S_{x}:\C\to\mfL_x$.  Since $\mu$ is a finite measure, 
$1\in L^{\infty}(X,\mu)\subset L^{2}(X,\mu)$.  Then $\bt=S(1)$ and $\al=T(1)$ belong to $L^{2}(\mathcal{H},\mu)$.   We now claim that 
$\al,\bt\in L^{\infty}(\mathcal{H},\mu)$ and that for $a, b\in L^{2}(X,\mu)$, we have $S(b)=b\bt$ and 
$T(a)=a\al$.  (Strictly, we should write $\pi(b)\bt, \pi(a)\al$ in place of $b\bt, a\al$.)

To prove this, since $S$ is a $C_{0}(X)$-module map, we have $S(b)=S(b.1)=bS(1)=b\bt$ for $b\in C_{c}(X)$.  Suppose that $\bt$ does not belong to $L^{\infty}(\mathcal{H},\mu)$.  Then for each positive integer $n$,
$A_{n}=\{x\in X: \norm{\bt(x)}>n\}$ has positive $\mu$-measure.  By regularity, there exists a compact subset $C_n$ of $A_{n}$ with $\mu(C_n)>0$ and an open subset $V_n$ of $X$ containing $C_n$ such that $\mu(V_n)<(3/2)\mu(C_n)$.  Let $a_n\in C_{c}(X)$ be such that $0\leq a_n\leq 1$, $a_n=1$ on $C_{n}$ and $a_n$ vanishes outside $V_{n}$.  Then for all $n$,
\begin{flalign*}
\norm{S}((3/2)\mu(C_n))^{1/2} &\geq \norm{S}(\int \chi_{V_n}(x)\,d\mu(x))^{1/2}\geq 
\norm{S}\norm{a_n}_{L^{2}(X,\mu)}\\
& \hspace{-1.3in}\geq \norm{S(a_n)}\geq (\int_{C_{n}} a_n(x)^{2}\norm{\bt(x)}^{2}\,d\mu(x))^{1/2}\geq n\mu(C_n)^{1/2}.
\end{flalign*} 
This is impossible.   So $\bt, \al\in L^{\infty}(\mathcal{H},\mu)$.   Since $S$ is continuous on, and $C_{c}(X)$ is dense in, $L^{2}(X,\mu)$, we obtain that for all $a, b\in L^{2}(X,\mu)$, $Ta=a\al$ and $Sb=b\bt$ (pointwise multiplication).  Note that $\norm{a\al}_{2}\leq \norm{a}_{2}\norm{\al}, \norm{b\bt}_{2}\leq \norm{b}_{2}\norm{\bt}$, and that 
\begin{equation}   \label{eq:normsxieta}
\norm{T}=\norm{\al}, \norm{S}=\norm{\bt}.
\end{equation}
 
Let $\phi\in B_{\mu}(G)$ be given by: $\phi =(\al ,\bt )$.  Then for 
$F\in C_c(G)$ and $a,b\in L^2(X,\mu )$ and noting that $a\al, b\bt\in L^{2}(\mathcal{H},\mu)$, we have, using 
(\ref{eq:phipi}) and (\ref{eq:pirep}),
\begin{flalign*} 
\lan\Phi (F)a,b\ran &=\lan\pi (F)a\al ,b\bt\ran\\
&\hspace{-1.5in} =\int F(g)\lan L_{g}\al_{s(g)},\bt_{r(g)}\ran a(s(g))\ov{b}(r(g))\,d\nu_0(g).
\end{flalign*}
So 
\begin{equation}\label{eq:phiphi}
\lan\Phi (F)a,b\ran=\int F(g)\phi(g)a(s(g))\ov{b}(r(g))\,d\nu_0(g).
\end{equation}
The function $\phi$ is determined $\la^{\mu}$ a.e. (equivalently $\nu_{0}$-a.e.) since the set of functions on $G$ of the form $F(a\circ s)(\ov{b}\circ r)$ ($a,b\in A)$ equals $C_{c}(G)$.  Since the elements of $B_{\mu}(G)$ are identified $\la^{\mu}$-a.e., $\Phi$ determines the element $\phi\in B_{\mu}(G)$.
The map $\Phi\to\phi$ is obviously linear, and is one-to-one into $B_{\mu}(G)$ since, from (\ref{eq:phiphi}), $\phi$ determines $\Phi$.   To show that the map $\Phi$ is also onto $B_{\mu}(G)$, let 
$\phi_{1}=(\al_{1} ,\bt_{1})\in B_{\mu}(G)$ where $\al_{1}, \bt_{1}$ are $L^{\infty}$-sections for some representation triple $(L_{1},\mathcal{H}_{1},\mu)$.  Let $\pi_{1}$ be the integrated form of $L_{1}$.   Then, from 
(\ref{eq:phiphi}) and the argument leading up to it, we can define the $\Phi_{1}:B\to B(L^2(X,\mu))$, corresponding to $\phi_{1}$, and independently of the choice of $\al_{1}, \bt_{1}$, by:
\[  \lan\Phi_{1}(F)a,b\ran=\lan\pi_{1}(F)a\al_{1} ,b\bt_{1}\ran .   \]
To see that $\Phi_{1}(F)\in B(L^{2}(X,\mu))$, 
\[ \left|\lan \Phi(F)a,b\ran\right| \leq \norm{F}_{C^{*}(G,\mu)}\norm{a\al_{1}}_{2}\norm{b\bt_{1}}_{2}
\leq [\norm{F}_{C^{*}(G,\mu)}\norm{\al_{1}}\norm{\bt_{1}}] \norm{a}_{2}\norm{b}_{2},  \]  
and so 
\begin{equation}   \label{eq:normalbt}
\norm{\Phi}\leq \norm{\al_{1}}\norm{\bt_{1}}.
\end{equation}

We claim that $\Phi_{1}\in CB_{A}(B,B(L^2(X,\mu))$.   To see this, 
by Theorem~\ref{th:WHP}, $\Phi_{1}\in CB(B,B(L^{2}(X,\mu)))$ since it is of the form: $F\to S_{1}^{*}\pi_{1}(F)T_{1}$ where $S_{1}(b)=b\bt_{1}$, $T_{1}(a)=a\al_{1}$.   (We note that $T_{1}, S_{1}$ are bounded, and commute with the $A$-action since, for example, in the $S_{1}$-case, for $f\in A$, $fS_{1}(b)=fb\bt_{1}=S_{1}(fb)$.)  Then 
$\Phi_{1}\in CB_{A}(B,B(L^2(X,\mu))$ since, for example, for $f, f'\in A$,  $F\in C_{c}(G)$, $\Phi_{1}(fFf')=S_{1}^{*}\pi_{1}(f)\pi_{1}(F)\pi_{1}(f')T_{1}=fS_{1}^{*}\pi_{1}(F)T_{1}f'=f\Phi_{1}(F)f'$.

Last, we have to show that $\norm{\Phi}=\norm{\phi}_{\mu}$.  To prove this, by (\ref{eq:normst}) and the definition of $\norm{\phi}_{\mu}$, 
\[\norm{\Phi}=\norm{S}\norm{T}=\norm{\al}\norm{\bt}\geq \norm{\phi}_{\mu}.\]
On the other hand, taking the infimum over all pairs 
$(\al_{1},\bt_{1})$ for which $\phi=(\al_{1},\bt_{1})$ in (\ref{eq:normalbt})
gives $\norm{\Phi}\leq \norm{\phi}_{\mu}$, so that 
$\norm{\Phi}=\norm{\phi}_{\mu}$ as asserted.  
\end{proof}

\noindent
{\bf Notes}\\

\noindent (1)  It follows (as in \cite{Ren}) from the preceding theorem that $B_{\mu}(G)$ can be made into an operator space by transferring to it the operator space structure of $X_{\mu}^{*}$ or 
$CB_{A}(B,B(L^2(X,\mu)))$.  (We get the same operator space structure on $B_{\mu}(G)$ whichever of these two operator spaces we choose.)  Nevertheless, an unsatisfactory feature of the preceding theorem is that while two of the three spaces involved ($X_{\mu}^{*}, CB_{A}(B,B(L^2(X,\mu)))$) are operator spaces {\em a priori}, this is not the case 
for the third space $B_{\mu}(G)$.   In \cite[2.3]{Ren}, Renault sketches an $M_{n}$-vector-valued theory of 
$B_{\mu}(G)$ to obtain a Banach algebra $B_{\mu}(G,M_{n})$ which can be used to give an operator space structure for $B_{\mu}(G)$.  \\

\noindent (2)   Earlier in this section, we defined the $\cs$ $C_{\mu}^{*}(G)$ used by Renault in \cite{Ren}.  Renault gives there a ``$C_{\mu}^{*}(G)$'' version of the preceding theorem, with the $\cs$ $A=C_{0}(X)$ replaced by 
$A'=L^{\infty}(X)$.   The result in \cite{Ren} corresponding to Theorem~\ref{th:dualbg} asserts:
\begin{multline}   
(\ov{L^{2}(X,\mu)}^{r}\otimes_{hA'}C^{*}_{\mu}(G)\otimes_{h A'}L^2(X,\mu )^c)^{*}\\
= CB_{A'}(C^{*}_{\mu}(G),B(L^2(X,\mu)))=
B_{\mu}(G).
\end{multline}              
From their definitions,  $C^{*}(G,\mu)$ is a $C^{*}$-subalgebra of 
$C^{*}_{\mu}(G)$, and since $A=C_{0}(X)\hookrightarrow A'=L^{\infty}(X,\mu)$, the restriction map
\[  R: CB_{A'}(C^{*}_{\mu}(G),B(L^2(X,\mu)))\to CB_{A}(C^{*}(G,\mu),B(L^2(X,\mu)))   \] 
is a norm-decreasing $A$-bimodule map.  So if these two spaces of completely bounded module maps are ``the same'', $R$ has to be a $C_{0}(X)$-module isomorphism.   I do not know if this is true or not.  One can show, as follows, that $R$ is an onto map.  To see this, factorize $\Phi\in CB_{A}(C^{*}(G,\mu),B(L^2(X,\mu)))$  as in Proposition~\ref{prop:ccmodule}, so that for some representation $\pi$ of $C^{*}(G,\mu)$ on a Hilbert space $\mfL$ and $A$-module maps 
$S, T:L^{2}(X,\mu)\to \mfL$, we have, for all $F\in C_{c}(G)$, $\Phi(F)=S^{*}\pi(F)T$.  Arguing as in the proof of Theorem~\ref{th:dualbg}, we can take $\pi$ to be the integrated form of a representation triple $(L,\mfH,\mu)$.   
Integrating up $(L,\mfH,\mu)$ over $L^{I}(G)$ yields a representation $\pi'$ of $C_{\mu}^{*}(G)$ on 
$\mfL$ and also of $L^{\infty}(X,\mu)$ as multipliers (extending the representation of $C_{0}(X)$ on $\mfL$).   Note that $S, T$ are $L^{\infty}(X,\mu)$-module maps since $A_{\mu}$ is strong operator dense in $L^{\infty}(X,\mu)$.  So 
$\Phi'$, given by $\Phi'(F)=S^{*}\pi'(F)T$, belongs to $CB_{A'}(C^{*}_{\mu}(G),B(L^2(X,\mu)))$ and $R(\Phi')=\Phi$.

The $\cs$ $C^{*}_{\mu}(G)$ is, in general, more difficult to determine than $C^{*}(G,\mu)$ (cf. Example 3 of the next section).   A difficulty with $C^{*}_{\mu}(G)$ is that the image of $C_{c}(G)$ is not usually dense in it, and indeed it is not usually separable.  (In particular, disintegration theory does not apply to its general representations.)  

The next section examines some examples of Theorem~\ref{th:dualbg}, and relates them to other operator space theoretic results.

\section{Examples}


\noindent (1) Let $G$ be a locally compact group.   Since the identity element $e$ of $G$ is the only unit in $G$, 
$\mu=\de_{e}$ is the only quasi-invariant measure for $G$. Then $C^{*}(G,\mu)=C^{*}(G)$, and Theorem~\ref{th:dualbg} reduces to:  
\[ (\ov{\C}^{r}\otimes_h C^{*}(G)\otimes_h \C^{c})^{*}=CB(C^{*}(G),\C)=B_{\mu}(G). \]
Examining these three spaces in turn, $\C$ in the first of these is treated as a Hilbert space in the obvious way.  As an operator space, $\C^{c}$ is realized as $B(\C,\C)$, so that its operator space structure is the same as the canonical $\cs$ operator space structure on $\C$.  On the other hand, as in \S 2, $\ov{C}^{r}$ as an operator space is identified with $B(\C,\C)$ so that again this is the same as $\C$ with its canonical $\cs$ operator structure.  By 
Proposition~\ref{prop:axa}, $\ov{C}^{r}\otimes_h C^{*}(G)\otimes_{h}\C^{c}=C^{*}(G)$ so that its dual is 
$C^{*}(G)^{*}$.  For the second of these, $CB(C^{*}(G),\C)=B(C^{*}(G),\C)=C^{*}(G)^{*}$ again.  For the third, 
$B_{\mu}(G)$ is the Banach algebra $B(G)$ with Eymard's norm, and it is a theorem of Eymard that 
$B(G)=C^{*}(G)^{*}$.\\

\noindent (2) Let $G=X$.  So $G$ is a unit groupoid, and every probability measure $\mu$ on $X$ is quasi-invariant.  For convenience we take $\mu$ to have support $X$.  Each $\la^{x}$ is the point mass at $x$, and since the support of 
$\mu$ is $X$, the $I$-norm on $C_{c}(X)$ is just the $\sup$-norm.  It follows that the representations of $G$ are just the continuous representations of $C_{0}(X)$ so that $C^{*}(G,\mu)=C_{0}(X)=A$. Theorem~\ref{th:dualbg} then becomes:
\[  (\ov{L^{2}(X,\mu)}^{r}\otimes_{hA} A\otimes_{hA} L^{2}(X,\mu)^{c})^{*}=CB_{A}(A,L^2(X,\mu))=B_{\mu}(G).   \]
We show directly that each of these spaces is $L^{\infty}(X,\mu)$.

For the first of these, by Proposition~\ref{prop:axa} and the associativity of the module Haagerup tensor product, 
it is just the dual of $\ov{L^{2}(X,\mu)}^{r}\otimes_{hA} L^{2}(X,\mu)$.  Now 
$\ov{L^{2}(X,\mu)}^{r}\otimes_{hA} L^{2}(X,\mu)^{c}
=(\ov{L^{2}(X,\mu)}^{r}\otimes_{h} L^{2}(X,\mu)^{c})/N$ where $N$ is the closure of the subspace spanned by 
elements of the form $\ov{\eta}f\otimes \xi - \ov{\eta}\otimes f\xi$ where $\ov{\eta}\in \ov{L^{2}(X,\mu)}^{r}, 
\xi\in L^{2}(X,\mu)^{c}$.  Note that $\ov{\eta}f=\ov{\ov{f}\eta}$.
By \cite[Proposition 9.3.4]{EffrosRuan},
\[   \ov{L^{2}(X,\mu)}^{r}\otimes_{h} L^{2}(X,\mu)^{c}=\mathfrak{T},   \]
the operator space of trace class operators on $L^{2}(X,\mu)$.  (One gives (\cite[Theorem 3.2.3]{EffrosRuan}) 
$\mathfrak{T}$ the operator space structure that it inherits as the dual of the $\cs$ of compact operators on 
$L^{2}(X,\mu)$.)  This identification sends simple tensors to finite rank operators: 
$\ov{\eta}\otimes \xi\to T_{\eta,\xi}$, where 
$T_{\eta,\xi}(\zeta)=\lan\zeta,\eta\ran \xi$.  We now identify 
$(\ov{L^{2}(X,\mu)}^{r}\otimes_{hA} L^{2}(X,\mu)^{c})^{*}=N^{\perp}$ in $\mathfrak{T}^{*}=B(L^{2}(X,\mu))$.  The duality between 
$\mathfrak{T}$ and $B(L^{2}(X,\mu))$ is given by: 
$\lan S, T_{\eta,\xi}\ran=Tr(ST_{\eta,\xi})=Tr(T_{\eta,S\xi})=\lan S\xi,\eta\ran$.  For $S$ to vanish on $N$, we require
that for all $\xi,\eta\in L^{2}(X,\mu), f\in C_{0}(X)$, 
\[ 0=Tr(S(T_{\eta \ov{f},\xi} - T_{\eta,f\xi}))=\lan \eta\ov{f},S\xi\ran - \lan\eta,Sf\xi\ran =\lan\eta,(fS-Sf)\xi\ran,  \]
i.e. that $S$ should commute with $C_{0}(X)$ regarded as a $\cs$ of multiplication operators on $L^{2}(X,\mu)$..  Taking the weak operator closure of $C_{0}(X)$ in $B(L^{2}(X,\mu))$ gives that $S$ commutes with $L^{\infty}(X,\mu)$, and since $L^{\infty}(X,\mu)$ is a masa in $B(L^{2}(X,\mu))$, we obtain that  
$(\ov{L^{2}(X,\mu)}^{r}\otimes_{hA} L^{2}(X,\mu)^{c})^{*}=L^{\infty}(X,\mu)$ as an operator space.

For the second of these, by a module version of the Arveson-Wittstock Hahn-Banach theorem (\cite[Corollary 3.6.3]{BlecherL}, any 
$\Phi\in CB_{A}(A,B(L^2(X,\mu)))$ extends to a completely bounded $A$-bimodule map $\tilde{\Phi}$ from the unitization $A^{+}$ of $A$ into $B(L^2(X,\mu))$.  Let $T_{\Phi}=\tilde{\Phi}(1)$.  Then $\Phi(a)=aT_{\Phi}=
T_{\Phi}a$ in $B(L^{2}(X,\mu)$.   Then $T_{\phi}$ is uniquely determined by $\Phi$, and as
in the previous paragraph, $T_{\Phi}\in L^{\infty}(X,\mu)$.   Define 
$\Ga:CB_{A}(A,L^2(X,\mu))\to L^{\infty}(X,\mu)$ by:  $\Ga(\Phi)=T_{\Phi}$.  Then $\Ga$ is a linear isometry.  Next, for any $T\in L^{\infty}(X,\mu)$, the map $a\to aT$ belongs to  $CB_{A}(A,B(L^2(X,\mu)))$ using elementary algebra and \cite[Proposition 2.2.6]{EffrosRuan}, and so $\Ga$ is a complete isometry. 

For the third of these, noting that $\la^{\mu}=\mu$, the elements of $P(G)$ are the classes of Borel measurable, bounded, positive functions on $X$, and so the span $B_{\mu}(G)$ of the image of $P(G)$ in $L^{\infty}(X,\mu )$ is just $L^{\infty}(X,\mu)$.  It is easy to check that the $B_{\mu}(G)$-norm is the same as the $L^{\infty}$-norm.\\

\noindent (3) Take the locally compact groupoid $G$ to be the discrete groupoid $\Pos\x \Z$, a bundle of groups each isomorphic to $\Z$, over the set $\Pos$ of positive integers.  Then $X=\Pos$ and $A=c_{0}$.  
Take $\mu=\sum_{n=1}^{\infty}2^{-n}\de_{n}$.  Then $L^{2}(X,\mu)\cong \ell^{2}$.  Also, every measure on $X$ is absolutely continuous with respect to $\mu$, and so all of the representation triples for $G$ can be taken to be of the form $(L,\mathcal{H},\mu)$.  It follows that $C^{*}(G,\mu)=C^{*}(G)$.  Next, $C_{c}(G)$ is just the algebra of complex-valued functions $F$ on $G$ with finite support and with $I$-norm given by: $\norm{F}=\max_{n\in \Pos} \sum_{m\in \Z} \left|F(n,m)\right|$, and so its closure is the commutative Banach $^{*}$-algebra $c_{0}(\ell^{1}(\Z))$ with pointwise convolution multiplication.  The characters of this Banach algebra are of the form $F\to \chi(F(n))$ where $\chi\in \widehat{\Z}=\T$, the circle group, and we obtain that 
$C^{*}(G,\mu)=c_{0}(C(\T))$.
  
Turning to $B_{\mu}(G)$, it is the set of bounded functions $\phi$ on $G$ such that 
$\phi_{n}=\phi_{\mid G^{n}}\in B(G^{n})$.  Then for each $n$,  there exists a Hilbert space 
$\mfH_{n}$ and $\al_{n}, \bt_{n}\in \mfH_{n}$ such that $\phi_{n}=(\al_{n},\bt_{n})$ with 
$\norm{\phi_{n}}$ close to $\norm{\al_{n}}\norm{\bt_{n}}$.   We can take $\al, \bt$ as bounded sections of the Hilbert bundle 
$\{\mfH_{n}\}$ over $\Pos$, and it is simple to check that 
$\norm{\phi}=\sup_{n\geq 1} \norm{\phi_{n}}$ and that 
$B_{\mu}(G)=\ell^{\infty}(B(\Z))=\ell^{\infty}(M(\T))$. 

Theorem~\ref{th:dualbg} then states that
\begin{equation}   \label{eq:c0ct} 
 (\ov{\ell^{2}}^{r}\otimes_{hc_{0}} c_{0}(C(\T))\otimes_{hc_{0}} (\ell^{2})^{c})^{*}=CB_{c_{0}}(c_{0}(C(\T)),B(\ell^{2}))
=B_{\mu}(G). 
\end{equation}
Here, $c_{0}(C(\T))=c_{0}\otimes C(\T)$ is a $c_{0}$-module by multiplying by $c_{0}\otimes 1$.  We now look more closely at the terms in (\ref{eq:c0ct}).   Starting with the middle one, $T:c_{0}(C(\T))\to B(\ell^{2})$ belongs to 
$CB_{c_{0}}(c_{0}(C(\T)),B(\ell^{2}))$ if and only if it is completely bounded and commutes with the multiplication representation of $c_{0}$ on $\ell^{2}$.  Then the range of $T$ commutes also with the masa 
$\ell^{\infty}\subset B(\ell^{2})$, and so can be identified with a subspace of $\ell^{\infty}$.  For $h\in C(\T))$ and 
$i\geq 1$ let $h_{i}\in c_{0}(C(\T))$ be given by: $h_{i}(n)=h$ if $n=i$ and is $0$ otherwise.  Since $T$ is a $c_{0}$-module map, $Th_{i}\in \ell^{\infty}$ has all its components $0$ except possibly for its $i$th entry, $\al_{i}(h)$.
Then $\al_{i}$ is a continuous linear functional on $C(\T)$ and so is determined by a measure $\mu(i)\in M(\T)$.
The map $T\to \{\mu(n)\}$ is a linear isometry onto 
$B_{\mu}(G)=\ell^{\infty}(M(\T))$.  This gives the identity of the middle and final terms in (\ref{eq:c0ct}).

We now sketch how one identifies $B_{\mu}(G)$ with $\mfX_{\mu}^{*}$.   We start with 
$\phi\in \ell^{\infty}(M(\T))=B_{\mu}(G)$.  Then the continuous linear functional $S_{\phi}$ on \\
$\ov{\ell^{2}}^{r}\otimes_{hc_{0}} c_{0}(C(\T))\otimes_{hc_{0}} (\ell^{2})^{c}$ associated with $\phi$ in
(\ref{eq:c0ct}) is given by: 
\[   S_{\phi}(\ov{\eta}\otimes F\otimes \xi)=\sum_{i\geq 1} \ov{\eta_{n}}\phi(n)(F(n))\xi_{n} \]
for $\xi, \eta\in \ell^{2}$ and $F\in c_{0}(C(\T))$.   The map $\theta\to T_{\theta}$ of (\ref{eq:etastar}) in the present case is as follows.  With respect to the standard basis on $\ell^{2}$, 
$T_{\theta}$ is just the diagonal operator whose $n$th diagonal entry is 
$\theta(\ov{e_{n}}\otimes F\otimes e_{n})$.

We now make some comments about $C^{*}_{\mu}(G)$.  The Banach $^{*}$-algebra $L^{I}(G)$ is easily seen to be 
$\ell^{\infty}(\ell^{1}(\Z))$ with pointwise convolution.  We write its elements as sequences
$\{g_{n}\}$ of $\ell^{1}(\Z)$-elements such that the map $n\to g_{n}$ is bounded.   
Taking the $\sup$ norm over the characters of $c_{0}(C(\T))$ (canonically extended to characters of $\ell^{\infty}(C(\T))$) 
identifies $C^{*}_{\mu}(G)$ with the closure in 
$\ell^{\infty}(C(\T))$ of its subalgebra 
$Q=\{\{\widehat{g_{n}}\}: g_{n}\in \ell^{1}(\Z), \sup \norm{g_{n}}_{1}<\infty\}$.  
Of course, $C^{*}_{\mu}(G)$ does contain $C^{*}(G,\mu)=c_{0}(C(\T))$, and also all maps from $\Pos$ to $C(\T)$ that have finite range.  However, {\em $C^{*}_{\mu}(G)$ is a proper subalgebra of $\ell^{\infty}(C(\T))$.}   I am grateful to Colin Graham for suggesting the following argument.  We use a well-known Banach space result: {\em if $X, Y$ are Banach spaces and $T:X\to Y$ is a bounded linear map that is not onto $Y$, then for all $n$, there exists $f_{n}$ in the unit ball $U$ of $Y$ with the property that whenever $g\in X$ is such that $\norm{T(g) - f_{n}}<1/2$, then $\norm{g}>n$.} 
(It can be proved by supposing the contrary and then showing that for every $f\in U$, there exists $g\in X$, the sum of a geometric series in $X$, such that $T(g)=f$, contradicting the fact that $T$ is not onto.)  We apply this result in the case where $X=\ell^{1}(\Z)$, $Y=C(\T)$, and $T=\mathcal{F}$, the Fourier transform $g\to \hat{g}$.  It is well-known (see, for example, \cite[Chapter 5, 4.5]{Reiter}, \cite[(37.19)]{HR2}) that $\mathcal{F}$ has dense range but is not onto.  Then for each $n\geq 1$, there exists $f_{n}$ in the unit ball $U$ of $C(\T)$ with the property that whenever $g\in \ell^{1}(\Z)$ is such that $\norm{\hat{g} - f_{n}}<1/2$, then $\norm{g}_{1}>n$. Let $F\in \ell^{\infty}(C(\T))$ be given by: $F(n)=f_{n}$.   Then by construction 
$\norm{F - \{\widehat{g_{n}}\}}\geq 1/2$ in $\ell^{\infty}(C(\T))$ 
for all $\{\widehat{g_{n}}\}\in Q$ so that $F\notin C_{\mu}^{*}(G)$.\\   

\noindent (4) This example gives a groupoid interpretation of a remarkable result, described below, concerning Schur products.  The result is due to Haagerup (\cite{Haag}).  (See also \cite{Waldual} for an early groupoid approach to Schur products.)  The theory of Schur products and complete boundedness is treated in detail in the book of Paulsen 
(\cite[Chapter 8]{Paulsen}).  For $A\in M_{n}$, define a linear map $S_{A}:M_{n}\to M_{n}$ by: $S_{A}(B)_{i,j}=A_{i,j}B_{i,j}$.   So $S_{A}(B)=A*B$, the Schur product of the matrices $A, B$.  By finite dimensionality, $S_{A}$ is a bounded linear operator on $M_{n}$.   The following definitive theorem is \cite[Theorem 8.7]{Paulsen}.

\begin{theorem}    \label{th:sacb}
The following three statements are equivalent: 
\be
\item $\norm{S_{A}}\leq 1$
\item  $\norm{S_{A}}_{cb}\leq 1$
\item  there exist $2n$ vectors $\xi_{j}, \eta_{i}$ ($1\leq j,i\leq n$) in the unit ball of some Hilbert space such that 
$A_{i,j}=\lan \xi_{j},\eta_{i}\ran$ for all $i,j)$.
\ee
\end{theorem}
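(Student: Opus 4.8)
The plan is to prove the cyclic chain of implications $(3)\Rightarrow(2)\Rightarrow(1)\Rightarrow(3)$, with the last one carrying all the substance.

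For $(3)\Rightarrow(2)$ I would realize $S_A$ in the Stinespring form of Theorem~\ref{th:WHP}. Given vectors $\xi_j,\eta_i$ in a Hilbert space $\mfH$ with $A_{i,j}=\langle\xi_j,\eta_i\rangle$ and $\norm{\xi_j},\norm{\eta_i}\le 1$, define $W,V:\C^n\to\mfH\otimes\C^n$ by $We_j=\xi_j\otimes e_j$ and $Ve_i=\eta_i\otimes e_i$; since the vectors $\{\xi_j\otimes e_j\}$ are orthogonal with norms at most $1$, both $V$ and $W$ are contractions. Let $\pi(B)=I_{\mfH}\otimes B$ be the amplification, a $^{*}$-representation of $M_n$. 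A direct computation of matrix entries gives $V^{*}\pi(B)W=S_A(B)$ for every $B$, so by the converse half of Theorem~\ref{th:WHP}, $S_A$ is completely bounded with $\norm{S_A}_{cb}\le\norm{V}\,\norm{W}\le 1$, which is $(2)$. The implication $(2)\Rightarrow(1)$ is immediate, since $\norm{S_A}=\norm{(S_A)_1}\le\sup_n\norm{(S_A)_n}=\norm{S_A}_{cb}$.

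The heart of the matter is $(1)\Rightarrow(3)$, and this is where I expect the main obstacle. First I would reformulate $(3)$ in factored form: the existence of the $\xi_j,\eta_i$ is equivalent to writing $A=V^{*}W$ with $V,W\in M_{d,n}$ whose columns all have norm at most $1$, which in turn is equivalent to the positivity $\left(\begin{smallmatrix} D_1 & A\\ A^{*} & D_2\end{smallmatrix}\right)\ge 0$ for some diagonal matrices $0\le D_1,D_2$ with diagonal entries at most $1$ (the diagonal blocks being $V^{*}V$ and $W^{*}W$). Thus $(3)$ describes a compact, convex, balanced set $K\subset M_n$, the unit ball of the factorization ($\gamma_2$-type) norm, and the easy directions already give the inclusion $K\subseteq\{A:\norm{S_A}\le 1\}$. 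The goal is the reverse inclusion.

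To obtain it I would argue by contradiction via Hahn--Banach separation with respect to the trace pairing $\langle C,B\rangle=\operatorname{tr}(B^{*}C)$. If $\norm{S_A}\le 1$ but $A\notin K$, there is a matrix $B$ with $\operatorname{Re}\operatorname{tr}(B^{*}A)$ strictly exceeding the support functional $h_K(B)=\sup_{C\in K}\operatorname{Re}\operatorname{tr}(B^{*}C)$. Optimizing over the vectors $\xi_j,\eta_i$ reduces $h_K(B)$ to a mixed column-norm expression in $B$, and the crux is to convert this into an entrywise factorization $B_{i,j}=\overline{X_{i,j}}Y_{i,j}$ with the operator norm $\norm{X}$ and the trace-class norm of $Y$ both controlled by $h_K(B)$; feeding this into $|\operatorname{tr}(B^{*}A)|=|\langle S_A(X),Y\rangle|\le\norm{S_A}\,\norm{X}\,\norm{Y}_{1}\le\norm{X}\,\norm{Y}_{1}$ then contradicts the separation. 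This last step, producing the right factorization of the separating functional, is a Grothendieck-type statement about Schur multipliers and is the genuine technical obstacle; the easy implications and the block-matrix reformulation are routine. As a safeguard I would keep all vectors in a fixed space $\C^{n}$ so that $K$ is genuinely compact, and use the balancedness of $K$ to identify its support functional with the relevant dual norm throughout.
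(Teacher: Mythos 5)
Your implications $(3)\Rightarrow(2)$ (via $We_{j}=\xi_{j}\otimes e_{j}$, $Ve_{i}=\eta_{i}\otimes e_{i}$, $\pi(B)=I_{\mfH}\otimes B$) and $(2)\Rightarrow(1)$ are correct, but your proposal does not actually contain a proof of $(1)\Rightarrow(3)$, and that is where the entire content of the theorem sits. The step you isolate as the ``crux'' --- factoring an arbitrary separating functional $B$ entrywise as $B_{i,j}=\ov{X_{i,j}}Y_{i,j}$ with $\norm{X}\,\norm{Y}_{1}\leq h_{K}(B)$ --- is, by bipolarity, \emph{equivalent} to the statement $K=\{A:\norm{S_{A}}\leq 1\}$ that you are trying to prove: the easy direction already gives $h_{K}(B)\leq\inf\norm{X}\,\norm{Y}_{1}$, so your lemma asserts exactly the reverse inequality, which is the dual formulation of Haagerup's theorem. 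Deferring it leaves the argument circular, or at best a reduction of the theorem to an equivalent unproved statement. There is also a concrete error in your description of $K$: the positivity certificate cannot be taken with \emph{diagonal} blocks. A factorization $A=V^{*}W$ produces blocks $V^{*}V$ and $W^{*}W$, which are merely positive semidefinite with diagonal entries at most $1$, not diagonal. For the all-ones matrix $A=J_{2}$ one has $\norm{S_{A}}=1$ (Schur multiplication by $J_{2}$ is the identity), yet $\left(\begin{smallmatrix} D_{1} & J_{2}\\ J_{2} & D_{2}\end{smallmatrix}\right)$ is never positive semidefinite for diagonal $D_{1},D_{2}\leq I$: testing against the vector $(x,-x)$ with $x=(1,1)^{T}/\sqrt{2}$ gives at most $1+1-2\cdot 2<0$. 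So the set you would separate $A$ from is strictly smaller than $K$, and the separation scheme as set up targets a false statement (though this part is repairable by using general psd blocks).

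For comparison: the paper does not prove this theorem at all --- it imports it as \cite[Theorem 8.7]{Paulsen} (the result being Haagerup's) --- and the standard proof there distributes the difficulty differently, avoiding Grothendieck-type duality entirely. One first proves $(1)\Rightarrow(2)$ by exploiting that an ampliation of a Schur multiplier is again a Schur multiplier: under $M_{k}(M_{n})\cong M_{kn}$, the map $(S_{A})_{k}$ is $S_{J_{k}\otimes A}$ with $J_{k}$ the all-ones matrix, and one shows $\norm{S_{J_{k}\otimes A}}=\norm{S_{A}}$; this elementary computation is exactly what the paper points to in the first two paragraphs of \cite[Theorem 8.7]{Paulsen} (and in groupoid generality in \cite[Proposition 2.3]{Ren}). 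Then $(2)\Rightarrow(3)$ follows from machinery already in the paper: Theorem~\ref{th:WHP} gives $S_{A}(\cdot)=V^{*}\pi(\cdot)W$ with $\norm{V}\,\norm{W}=\norm{S_{A}}_{cb}\leq 1$, and since $A_{i,j}=\lan S_{A}(E_{i,j})e_{j},e_{i}\ran=\lan\pi(E_{i,j})We_{j},Ve_{i}\ran$ with $\pi(E_{i,j})=\pi(E_{i,1})\pi(E_{1,j})$ and $\pi(E_{i,1})^{*}=\pi(E_{1,i})$, the vectors $\xi_{j}=\pi(E_{1,j})We_{j}$ and $\eta_{i}=\pi(E_{1,i})Ve_{i}$ satisfy $A_{i,j}=\lan\xi_{j},\eta_{i}\ran$ and lie in the unit ball because the $\pi(E_{1,j})$ are partial isometries (alternatively, one can first make $V,W$ diagonal-module maps by the averaging of Proposition~\ref{prop:ccmodule} with $A=D_{n}$). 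You should keep your $(3)\Rightarrow(2)$ computation, which is exactly right, and replace the Hahn--Banach route by this factorization argument, whose only nontrivial ingredient is the ampliation identity.
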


We need to reformulate this for $A$'s with $\norm{S_{A}}$ arbitrary.  A straightforward scaling argument, left to the reader, gives the following desired reformulation.

\begin{theorem}    \label{th:sacb2}
The following three numbers are equal: 
\be
\item $\norm{S_{A}}$
\item  $\norm{S_{A}}_{cb}$
\item  $inf_{\xi,\eta,\mfH}\max_{i,j}\norm{\xi_{j}}\norm{\eta_{i}}$ where, for some Hilbert space $\mfH$, 
$\xi=\{\xi_{j}\}_{1\leq j\leq n},\\
 \eta=\{\eta_{i}\}_{1\leq i\leq n}$ belong to $\mfH^{n}$ and are such that 
$A_{i,j}=\lan \xi_{j},\eta_{i}\ran$.
\ee
\end{theorem}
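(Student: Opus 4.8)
The plan is to derive everything from Theorem~\ref{th:sacb} by homogeneity, exploiting that $A\mapsto S_{A}$ is linear in $A$, so that $\norm{S_{\la A}}=\left|\la\right|\norm{S_{A}}$ and $\norm{S_{\la A}}_{cb}=\left|\la\right|\norm{S_{A}}_{cb}$ for every scalar $\la$. First I would dispose of the degenerate case: if $A=0$ then $S_{A}=0$, all three numbers vanish (take every $\xi_{j}=\eta_{i}=0$ in the third), and there is nothing to prove. So assume $A\neq 0$ and set $t=\norm{S_{A}}$; evaluating $S_{A}$ on a matrix unit $E_{i,j}$ with $A_{i,j}\neq 0$ gives $\norm{S_{A}}\geq \left|A_{i,j}\right|>0$, so $t>0$ and we may normalise by $t$.

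For the equality of the first two numbers, recall that the completely bounded norm always dominates the operator norm, so $\norm{S_{A}}\leq \norm{S_{A}}_{cb}$ holds automatically. For the reverse inequality I would normalise: since $\norm{S_{A/t}}=1$, the implication (1)$\Rightarrow$(2) of Theorem~\ref{th:sacb} (the substantial direction, due to Haagerup) gives $\norm{S_{A/t}}_{cb}\leq 1$, and rescaling back yields $\norm{S_{A}}_{cb}=t\,\norm{S_{A/t}}_{cb}\leq t=\norm{S_{A}}$. Hence the first two numbers coincide.

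For the third number I would argue both inequalities by scaling the vectors. To bound it above by $t$, apply (1)$\Rightarrow$(3) of Theorem~\ref{th:sacb} to $A/t$ to obtain unit-ball vectors $\xi'_{j},\eta'_{i}$ in some $\mfH$ with $(A/t)_{i,j}=\lan \xi'_{j},\eta'_{i}\ran$; then the symmetric choice $\xi_{j}=t^{1/2}\xi'_{j}$, $\eta_{i}=t^{1/2}\eta'_{i}$ satisfies $A_{i,j}=\lan \xi_{j},\eta_{i}\ran$ and $\norm{\xi_{j}}\norm{\eta_{i}}\leq t$ for all $i,j$, so the infimum is at most $t=\norm{S_{A}}$. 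For the reverse, take any representation $A_{i,j}=\lan \xi_{j},\eta_{i}\ran$ and put $M=\max_{i,j}\norm{\xi_{j}}\norm{\eta_{i}}=\left(\max_{j}\norm{\xi_{j}}\right)\left(\max_{i}\norm{\eta_{i}}\right)$, which is positive since $A\neq 0$. Dividing the $\xi_{j}$ and the $\eta_{i}$ respectively by $\max_{j}\norm{\xi_{j}}$ and $\max_{i}\norm{\eta_{i}}$ produces unit-ball vectors representing $A/M$, so (3)$\Rightarrow$(1) of Theorem~\ref{th:sacb} gives $\norm{S_{A/M}}\leq 1$, i.e.\ $\norm{S_{A}}\leq M$. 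Taking the infimum over all representations then yields $\norm{S_{A}}\leq$ (third number), closing the chain of equalities.

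I do not expect a genuine obstacle: the argument is entirely bookkeeping with the homogeneity of the three quantities, which is exactly why the paper leaves it to the reader. The only points needing care are the degenerate case $A=0$ (and, relatedly, checking $t>0$ and $M>0$ before dividing), and keeping the two-sided rescaling consistent so that the product $\norm{\xi_{j}}\norm{\eta_{i}}$ tracks $t$ exactly; since that product is unchanged when a positive scalar is shifted from the $\xi$'s to the $\eta$'s, the symmetric factor $t^{1/2}$ above makes this transparent.
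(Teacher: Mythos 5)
Your proposal is correct and is exactly the ``straightforward scaling argument'' the paper invokes and leaves to the reader: it derives Theorem~\ref{th:sacb2} from Theorem~\ref{th:sacb} by homogeneity of $A\mapsto S_{A}$, with the degenerate case $A=0$ and the positivity of the normalising constants properly checked. Nothing further is needed.
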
 

We now interpret the equivalence of (2) and (3) of this theorem in terms of Theorem~\ref{th:dualbg} above for the groupoid $G=G_n=\{1,2,\ldots ,n\}\x\{1,2,\ldots ,n\}$ (an equivalence relation groupoid).  The product and inverse for $G_{n}$ are respectively given by: $(i,j)(j,k)=(i,k)$ and $(i,j)^{-1}=(j,i)$.  The unit space $X$ is the diagonal $\{(i,i):1\leq i\leq n\}$, which we usually identify with $\{1,2, \ldots ,n\}$.  Then  
$r(i,j)=i, s(i,j)=j$, $G^{i}=\{(i,j):1\leq j\leq n\}$.    We take $\la^{i}$ to be counting measure on $G^{i}$.   
The example $G_{n}$ is discussed in \cite[Example, \S 5]{Pat4} in the context of the continuous Fourier-Stieltjes algebra of a groupoid.  Since $G_{n}$ is discrete, $G_{n}$-Hilbert bundles in the Borel sense of this paper are the same as continuous $G_{n}$-Hilbert bundles in the sense of \cite{Pat4}.   In the latter context, a quasi-invariant measure is not directly involved, but in the present context, we need such a measure $\mu$.  We take
$\mu=\frac{\sum_{i=1}^{n}\de_{i}}{n}$, the sum of the point masses over $i$ divided by $n$.  Then 
$\la^{\mu}=\frac{\sum_{i,j}\de_{(i,j)}}{n}$, which is equivalent to counting measure on $G_{n}$.  Identifying 
$C_{c}(G)$ with $M_{n}$ - in which $f\in C_{c}(G)$ is identified with the matrix which has $f(i,j)$ at the 
$(i,j)$th place - we see that $C^{*}(G,\mu)=M_n$ under the usual matrix multiplication.   Theorem~\ref{th:dualbg} then assumes the form:
\begin{equation}    \label{eq:gn}
(\ov{\ell^{2}_{n}}^{r} \otimes_{h D_{n}} M_{n}\otimes_{h D_{n}} (\ell^{2}_{n})^c)^{*} 
\cong CB_{D_{n}}(M_{n},M_{n})\cong B_{\mu}(G_{n})  
\end{equation}
where $\ell^{2}_{n}$ is just $\C^{n}$ with the usual inner product divided by $n$.   Here, $A=C_{0}(X)$ is identified with  $D_{n}$, the $\C^{*}$-subalgebra of diagonal matrices in $M_{n}$, the multiplier action of $A$ on $M_{n}$ being given by matrix multiplication on the right and left.  

We start by looking at $B_{\mu}(G)$.  Let $\mfH\ne 0$ be a Hilbert space.  Then $\{1,2,\ldots ,n\}\x \mfH$ is a $G$-Hilbert bundle with action given by: $(i,j)(j,\zeta)=(i,\zeta)$.  Fix a unit vector 
$\zeta\in \mfH$ and also fix $i,j$.  Define sections $\al, \bt$ of the bundle by setting 
$\al_{i}=\zeta =\bt_{j}$, all other values of $\al, \bt$ being defined to be $0$.  Then with 
$\phi=(\al,\bt)$, we have $\phi(l,m)=\lan (l,m)(m,\al_{m}),(l,\bt_{l})\ran = \lan \al_{m},\bt_{l}\ran$, which equals
$1$ if $(l,m)=(j,i)$ and is $0$ otherwise.  So the matrix $\phi=e_{j,i}$.   It follows that as a vector space, 
$B_{\mu}(G)=M_{n}$, the space of all functions $F:G\to \C$.  However, multiplication in $B_{\mu}(G)$ is pointwise, so that $B_{\mu}(G)$ acts on itself (as $M_{n}$) by Schur multiplication. 

To determine the norm on $B_{\mu}(G_{n})$, we have to consider an arbitrary $G$-Hilbert bundle $\mathcal{H}$ over 
$X$.   This is just a collection of Hilbert spaces $\{\mfH_{i}\}_{1\leq i\leq n}$ with unitaries 
$L_{i,j}:\mfH_{j}\to \mfH_{i}$ satisfying $L_{i,j}\circ L_{j,k}=L_{i,k}$ and $L_{i,j}^{-1}=L_{i,j}^{*}=L_{j,i}$.   We have to consider sections $\al, \bt$ of this bundle and determine, for a given $\phi\in B_{\mu}(G)$, what is 
$\inf \norm{\al} \norm{\bt}$ over all possible ways of representing $\phi=(\al,\bt)$.  Given such a bundle and such sections $\al, \bt$, let $\mfH=\mfH_{1}$ and let $\xi_{j}=L_{1,j}\al_{j},\eta_{i}=L_{1,i}\bt_{i}$.   Then $\lan \xi_{j},\eta_{i}\ran=\lan L_{1,j}\al_{j},L_{1,i}\bt_{i}\ran=\lan L_{i,j}\al_{j},\bt_{i}\ran=\phi(i,j)$, and 
$\norm{\al}=\max_{j} \norm{\xi_{j}}, \norm{\bt}=\max_{i} \norm{\eta_{i}}$.  So $\norm{\phi}$ is exactly the same as the norm of the matrix $A=\phi$ given in (3) of Theorem~\ref{th:sacb2}.   Conversely, given $\phi=A\in M_{n}$ and vectors $\xi_{j}, \eta_{i}$ as in (3) of Theorem~\ref{th:sacb2}, we can construct a corresponding $G$-representation and sections $\al,\bt$ such that $\phi=(\al,\bt)$.   So (3) defines the norm on $B_{\mu}(G)$.

Turning next to the middle expression of (\ref{eq:gn}), it is easy to prove 
(\cite[Exercise 4.4]{Paulsen}) that $CB_{D_{n}}(M_{n},M_{n})=\{S_{A}:A\in M_{n}\}$, so that the identity of 
$CB_{D_{n}}(M_{n},M_{n})$ with $B_{\mu}(G)$ is equivalent to the identity of (2) and (3) of Theorem~\ref{th:sacb2}.
I do not know of a charaterization of the norm of $\mfX_{\mu}$, the module Haagerup tensor product whose dual is the first space in (\ref{eq:gn}).   The equivalence of (1) and (2) in Theorem~\ref{th:sacb2} follows in complete groupoid generality from \cite[Proposition 2.3]{Ren} - the proof for the special case of $G=G_{n}$ in 
Theorem~\ref{th:sacb2} appears in the first two paragraphs of \cite[Theorem 8.7]{Paulsen}.  (This is also proved in 
\cite[Proposition 8]{Waldual}, where estimates for $\norm{S_{A}}$ are obtained.)

Paulsen (\cite[Corollary 8.8]{Paulsen}) obtains an $\ell^{2}$-version of the above Theorem~\ref{th:sacb} (so that 
$B(\ell^{2})$ replaces $M_{n}=B(\ell^{2}_{n})$ and $A$ is an infinite matrix).   In this case, the groupoid $G$ would be $\Pos\x \Pos$ where $\Pos$ is the set of positive integers.  In a more general context, where $G=X\x X$,  Renault
(\cite[Example 1.3]{Ren}) points out that in this case, $B_{\mu}(G)$ is the space of functions $\phi$ on $G$
for which there exists a Hilbert space $\mfH$ and $\al,\bt\in L^{\infty}(X,\mfH,\mu)$ such that 
$\phi(x,y)=\lan \al(x),\bt(y)\ran$ ($x,y\in X$), and that these functions $\phi$ are the Hilbertian functions as defined by Grothendieck.

\end{document}